\documentclass[a4paper, 11pt]{article}
\usepackage{mathrsfs}
\usepackage{mathrsfs}
\usepackage{amsmath,amssymb}
\usepackage{amsfonts}
\usepackage[T1]{fontenc}
\usepackage[latin9]{inputenc}
\usepackage{amsthm}
\usepackage{graphicx}
\usepackage{amsmath}

\usepackage{amsthm}
\usepackage{array}
\usepackage{cases}
\usepackage{enumerate,enumitem,todonotes}
\usepackage{thmtools}
\usepackage{thm-restate}
\usepackage{bbm}

\usepackage{xcolor}
\usepackage{soul} 

\newcommand{\mad}{\mathrm{mad}}
\newcommand{\ch}{\mathrm{ch}}

\makeatletter
\def\th@plain{%
  \itshape 
}
\makeatother

\makeatletter
\renewenvironment{proof}[1][\proofname]{\par
  \pushQED{\qed}%
  \normalfont \topsep6\p@\@plus6\p@\relax
  \trivlist
  \item[\hskip\labelsep
        \bfseries
    #1\@addpunct{.}]\ignorespaces
}{%
  \popQED\endtrivlist\@endpefalse
}
\makeatother

\numberwithin{equation}{section}

\newtheorem{thm}{Theorem}[section]
\newtheorem{cor}[thm]{Corollary}

\newtheorem{lemma}[thm]{Lemma}

\newtheorem{example}[thm]{Example}

\newtheorem{oq}[thm]{Open Question}

\numberwithin{equation}{section}

\usepackage[varg]{txfonts}
\usepackage{graphicx, epsfig, subfigure}
\usepackage{enumerate}
\usepackage[square, numbers, sort&compress]{natbib}
\ifx\pdfoutput\undefined
 \usepackage[dvipdfm,%
  pdfstartview=FitH,
  bookmarks=true,%
  bookmarksnumbered=true,
  bookmarksopen=true,
  plainpages=false,%
  pdfpagelabels,%
  colorlinks=true,
  linkcolor=blue,
  citecolor=blue,%
  urlcolor=black,
  pdfborder=001]{hyperref}
  \AtBeginDvi{}  
\else
 \usepackage[pdftex,%
  pdfstartview=FitH,
  bookmarks=true,%
  bookmarksnumbered=true,
  bookmarksopen=true,
  plainpages=false,%
  pdfpagelabels,%
  colorlinks=true,
  linkcolor=blue,
  citecolor=blue,%
  urlcolor=black,
  pdfborder=001]{hyperref}%
\fi

\usepackage[left]{lineno}
\RequirePackage[normalem]{ulem} 
\RequirePackage{color}\definecolor{RED}{rgb}{1,0,0}\definecolor{BLUE}{rgb}{0,0,1} 
\providecommand{\DIFaddbegin}{} 
\providecommand{\DIFaddend}{} 
\providecommand{\DIFdelbegin}{} 
\providecommand{\DIFdelend}{} 
\providecommand{\DIFaddbeginFL}{} 
\providecommand{\DIFaddendFL}{} 
\providecommand{\DIFdelbeginFL}{} 
\providecommand{\DIFdelendFL}{} 
\newcommand{\DIFscaledelfig}{0.5}
\RequirePackage{settobox} 
\RequirePackage{letltxmacro} 
\newsavebox{\DIFdelgraphicsbox} 
\newlength{\DIFdelgraphicswidth} 
\newlength{\DIFdelgraphicsheight} 
\LetLtxMacro{\DIFOincludegraphics}{\includegraphics} 
\newcommand{\DIFaddincludegraphics}[2][]{{\color{blue}\fbox{\DIFOincludegraphics[#1]{#2}}}} 
\newcommand{\DIFdelincludegraphics}[2][]{
\sbox{\DIFdelgraphicsbox}{\DIFOincludegraphics[#1]{#2}}
\settoboxwidth{\DIFdelgraphicswidth}{\DIFdelgraphicsbox} 
\settoboxtotalheight{\DIFdelgraphicsheight}{\DIFdelgraphicsbox} 
\scalebox{\DIFscaledelfig}{
\parbox[b]{\DIFdelgraphicswidth}{\usebox{\DIFdelgraphicsbox}\\[-\baselineskip] \rule{\DIFdelgraphicswidth}{0em}}\llap{\resizebox{\DIFdelgraphicswidth}{\DIFdelgraphicsheight}{
\setlength{\unitlength}{\DIFdelgraphicswidth}
\begin{picture}(1,1)
\thicklines\linethickness{2pt} 
{\color[rgb]{1,0,0}\put(0,0){\framebox(1,1){}}}
{\color[rgb]{1,0,0}\put(0,0){\line( 1,1){1}}}
{\color[rgb]{1,0,0}\put(0,1){\line(1,-1){1}}}
\end{picture}
}\hspace*{3pt}}} 
} 
\LetLtxMacro{\DIFOaddbegin}{\DIFaddbegin} 
\LetLtxMacro{\DIFOaddend}{\DIFaddend} 
\LetLtxMacro{\DIFOdelbegin}{\DIFdelbegin} 
\LetLtxMacro{\DIFOdelend}{\DIFdelend} 
\DeclareRobustCommand{\DIFaddbegin}{\DIFOaddbegin \let\includegraphics\DIFaddincludegraphics} 
\DeclareRobustCommand{\DIFaddend}{\DIFOaddend \let\includegraphics\DIFOincludegraphics} 
\DeclareRobustCommand{\DIFdelbegin}{\DIFOdelbegin \let\includegraphics\DIFdelincludegraphics} 
\DeclareRobustCommand{\DIFdelend}{\DIFOaddend \let\includegraphics\DIFOincludegraphics} 
\LetLtxMacro{\DIFOaddbeginFL}{\DIFaddbeginFL} 
\LetLtxMacro{\DIFOaddendFL}{\DIFaddendFL} 
\LetLtxMacro{\DIFOdelbeginFL}{\DIFdelbeginFL} 
\LetLtxMacro{\DIFOdelendFL}{\DIFdelendFL} 
\DeclareRobustCommand{\DIFaddbeginFL}{\DIFOaddbeginFL \let\includegraphics\DIFaddincludegraphics} 
\DeclareRobustCommand{\DIFaddendFL}{\DIFOaddendFL \let\includegraphics\DIFOincludegraphics} 
\DeclareRobustCommand{\DIFdelbeginFL}{\DIFOdelbeginFL \let\includegraphics\DIFdelincludegraphics} 
\DeclareRobustCommand{\DIFdelendFL}{\DIFOaddendFL \let\includegraphics\DIFOincludegraphics} 
\begin{document}
\title{\LARGE  
On $(1^2,2^2)$-packing edge-coloring of sparse subcubic graphs 
}
\author{
Xujun Liu\thanks{Department of Applied Mathematics, School of Mathematics and Physics, Xi'an Jiaotong-Liverpool University, Suzhou, Jiangsu Province, 215123, China, \texttt{xujun.liu@xjtlu.edu.cn}; the research of X. Liu was supported by the National Natural Science Foundation of China under grant No.~12401466.}
\and
Jiacheng Yang\thanks{School of Mathematics and Physics, Xi'an Jiaotong-Liverpool University, Suzhou, Jiangsu Province, 215123, China, \texttt{jiacheng.yang24@student.xjtlu.edu.cn}.}
\and
Xin Zhang\thanks{School of Mathematics and Statistics, Xidian University, Xi'an, Shaanxi Province, 710126, China, \texttt{xzhang@xidian.edu.cn}.}
}

\date{}

\maketitle

\begin{abstract}
\baselineskip 0.60cm

For positive integers $\ell$ and $k$, a $(1^\ell, 2^k)$-packing edge-coloring of a graph $G$ is a partition of $E(G)$ into $\ell$ matchings and $k$ induced matchings. A graph is $d$-irregular if it has no adjacent vertices of degree $d$. Yang and Wu proved that every $3$-irregular subcubic graph admits a $(1,2^4)$-packing edge-coloring, which answered an open question of Hocquad, Lajou, and Lu\v zar in the affirmative. In this paper, we prove an analogue result that every $3$-irregular subcubic multigraph is $(1^2,2^2)$-packing edge-colorable. Our result is sharp since there are $3$-irregular subcubic graphs that are not $(1,2^3)$-packing edge-colorable and $(1^2,2)$-packing edge-colorable, respectively.

Hocquad, Lajou, and Lu\v zar conjectured that every subcubic planar graph is $(1^2,2^3)$-packing edge-colorable. Furthermore, they found a subcubic planar graph with girth $3$ that is not $(1^2,2^2)$-packing edge-colorable. For every fixed integer $k \ge 3$, we found graphs with girth $k$ that are not $(1^2,2)$- and not $(1,2^3)$-packing edge-colorable. It is natural to consider the question "what is the minimum positive integer $g$ such that every subcubic planar graph with girth at least $g$ is $(1^2,2^2)$-packing edge-colorable?". We prove $g$ is finite and in fact $g \le 20$. We also provide an example showing $g \ge 6$.

\vspace{3mm}\noindent \emph{Keywords}: subcubic graphs; packing edge-colorings; irregular graphs; planar graphs; girth.
\end{abstract}

\baselineskip 0.60cm

\section{Introduction}

An {\em $S$-packing edge-coloring} of a graph $G$, where $S = (s_1, \ldots, s_k)$ is a non-decreasing sequence of integers, is a partition of the edge set $E(G)$ into $E_1, \ldots, E_k$ such that for every pair of distinct edges $e_1,e_2 \in E_i$, $1 \le i \le k$, the distance between $e_1$ and $e_2$ (the vertex distance in the line graph between their corresponding vertices) is at least $s_i + 1$. The notion of $S$-packing edge-coloring was first generalized by Gastineau and Togni~\cite{GT1} from its vertex counterpart, which has also received a significant amount of attention from many researchers (e.g.~\cite{BKL1, BF1, BKRW1, FKL1, GT2}). To simplify the notation, we denote repetitions of the same numbers in $S$ using exponents and omit the word ``packing'' in this paper. For example, $(1,1,2,2,2)$-packing edge-coloring is denoted by $(1^2,2^3)$-edge-coloring, and furthermore, we say there are two $1$-colors and three $2$-colors. More generally, when there are multiple colors of the same type, we use $1_a, 1_b, \ldots$ to denote the $1$-colors and use $2_a, 2_b, \ldots$ to denote the $2$-colors. If there is only one 1-color or 2-color, we simply denote it by 1 or 2, respectively.

We focus on the case where each $s_i \in \{1,2\}$, $1 \le i \le k$. Using the $S$-packing edge-coloring notation, a $(1^{\ell})$-edge-coloring is a proper $\ell$ edge-coloring. By Vizing's theorem~\cite{V1} on edge coloring, the minimum $\ell$ such that a simple graph $G$ has a $(1^{\ell})$-edge-coloring is either $\Delta(G)$ or $\Delta(G) + 1$, where we denote the former class by Class I and the latter by Class II. Furthermore, a $(2^k)$-edge-coloring is a strong edge-coloring using $k$ colors. The strong edge-coloring was first introduced by Fouquet and Jolivet~\cite{FJ1}. Erd\H{o}s and Ne\v set\v ril~\cite{EN1} conjectured that the minimum $k$ such that $G$ has a $(2^k)$-edge-coloring is $\frac{5}{4} \Delta(G)^2$ when $\Delta(G)$ is even and $\frac{5}{4} \Delta(G)^2 - \frac{1}{2} \Delta(G) + \frac{1}{4}$ when $\Delta(G)$ is odd. Strong edge-coloring for graphs with small maximum degree was studied in~\cite{A1, HQT1, HSY1, KLRSWY1}. For $\Delta(G)$ large, progresses were made in~\cite{CKKR1, FKS1, LMSS1, MR1} and the current best upper bound ($1.772 \Delta(G)^2$) was proved by Hurley, de Joannis de Verclos, and Kang~\cite{HJK1} by probabilistic method.

Gastineau and Togni~\cite{GT1} studied the $S$-packing edge-coloring of subcubic graphs with a prescribed number of $1$'s in the sequence. They asked an open question that ``is it true that every subcubic graph has a $(1,2^7)$-packing edge-coloring?'' and conjectured every subcubic graph has a $(1^2,2^4)$-packing edge-coloring. Hocquad, Lajou, and Lu\v zar~\cite{HLL2} showed that every subcubic graph has a $(1,2^8)$-packing edge-coloring and a $(1^2,2^5)$-packing edge-coloring. They also conjectured that every subcubic graph has a $(1,2^7)$-packing edge-coloring and a $(1^2,2^4)$-packing edge-coloring. Liu, Santana, and Short~\cite{LSS1} answered the first conjecture of Hocquard Lajou, and Lu\v zar in the positive, and Liu and Yu~\cite{LY1} solved the second conjecture. Furthermore, Hocquard, Lajou, and Lu\v zar~\cite{HLL2} asked the following open question.

\begin{oq}[Hocquad, Lajou, and Lu\v zar~\cite{HLL2}]\label{question1}
Is it true that every subcubic bipartite graph, in which each edge has weight at most $5$ (i.e., $3$-irregular), admits a $(1,2^4)$-packing edge-coloring? 
\end{oq}

Question~\ref{question1} was answered in the affirmative by Yang and Wu~\cite{YW1}. In fact, they showed every $3$-irregular subcubic graph (not necessarily bipartite) admits a $(1,2^4)$-packing edge-coloring. In this paper, we first show an analogue result for $(1^2,2^2)$-packing edge-coloring of $3$-irregular subcubic multigraph.

\begin{thm}\label{theorem-1}
Every $3$-irregular subcubic multigraph has a $(1^2,2^2)$-packing edge-coloring.
\end{thm}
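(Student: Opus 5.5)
We argue by contradiction. Let $G$ be a counterexample minimizing $|V(G)|+|E(G)|$. Colors are $1_a,1_b$ (matchings) and $2_a,2_b$ (induced matchings). In a $3$-irregular subcubic multigraph every edge joins a vertex of degree at most $2$ to a vertex of degree at most $3$. So every $3$-vertex has all of its neighbours of degree at most $2$, and $G$ is close to a subdivision of a cubic multigraph. The plan has two stages. First we derive reducible configurations from minimality. Then we give a direct global coloring of the remaining structure: for instance, we reduce to the case where every $2$-vertex lies between two $3$-vertices, and color that case explicitly.

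\textbf{Reducible configurations.} We first show that $G$ is connected and has no $1$-vertex. Delete a pendant edge $uv$ with $d(u)=1$ and color $G-u$ by minimality. The edge $uv$ meets at most two other edges at $v$, which use at most two colors, so a free $1$-color exists unless both $1$-colors appear at $v$. In that case $v$ is a $3$-vertex and its other two neighbours have degree at most $2$. We then recolor using a $2$-color: a $2$-color on $uv$ conflicts only with edges at distance at most $2$ from $uv$, namely at most two edges at $v$ plus at most one further edge at each of the two $2$-neighbours of $v$. Recoloring one of those edges (a Kempe-type swap between $1_a$ and $1_b$ along a path) should free $2_a$ or $2_b$.

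Next we show there is no path $x\,u\,v\,y$ with $d(u)=d(v)=2$. Delete $uv$, or contract it; contraction keeps the multigraph $3$-irregular, and this is where multigraphs make the induction convenient. Then extend the coloring: the edge $uv$ sees only the edges $xu$ and $vy$, so a $1$-color is available unless $xu$ and $vy$ carry the two distinct $1$-colors. In that case we use a $2$-color, which is blocked only by edges at $x$ and at $y$.

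After these reductions, $G$ is the full subdivision $S(H)$ of a cubic multigraph $H$. Loops are handled by the subdivision as well: a loop at a $3$-vertex is impossible by irregularity, and a loop at a $2$-vertex forms a component on its own.

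\textbf{Global coloring of $S(H)$.} Each edge of $H$ becomes a path $x\,m\,y$ with half-edges $xm$ and $my$. We want to color the half-edges so that:
\begin{itemize}
\item at each $3$-vertex of $H$ the three half-edges get distinct colors or are otherwise consistent with the constraints;
\item the $1$-color classes are matchings;
\item each $2$-color class is induced.
\end{itemize}
In $S(H)$, two half-edges are within distance $2$ exactly when they share a $3$-vertex, or lie on a common edge of $H$, or lie on edges of $H$ sharing an endpoint.

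The intended scheme is as follows. Using Petersen's theorem when $H$ is bridgeless, or Vizing/Shannon-type arguments in general, take a perfect matching or a suitable spanning subgraph, and orient $H$. Then at each $3$-vertex $x$ give the three half-edges colors $1_a$, $1_b$, and one $2$-color. The $2$-colored half-edge at $x$ must be at distance at least $3$ from every other half-edge colored with the same $2$-color. If $x$ sends its $2$-colored half-edge along the edge $xy$, then $y$'s $2$-colored half-edge must avoid $xy$ and must use a different $2$-color from $x$'s whenever they are close. Concretely, choose for each vertex $x$ one incident edge $f(x)$ of $H$ so that the map is injective on edges; this is possible via an orientation in which every vertex has outdegree at least $1$. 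Then the problem becomes a $2$-coloring of the conflict graph on vertices, where $x\sim y$ if $f(x)$ and $f(y)$ are within distance $2$ in $S(H)$. The $1$-colors on the remaining two half-edges at each vertex must form matchings. Since each $2$-vertex $m$ then has exactly two half-edges, and at most one of them is $2$-colored unless both are, this amounts to properly $2$-edge-coloring a union of paths and cycles.

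\textbf{Main obstacle.} The hard step is making the $1$-color constraints (even-cycle parity) and the bipartiteness of the $2$-color conflict graph hold simultaneously. I would first try choosing $f$ as the edges of a perfect matching $M$ when $H$ is bridgeless. Then the half-edges of $H-M$ (a $2$-factor) get $1$-colors, and odd cycles in the $2$-factor cause parity trouble. This is fixed by flexibility at the middle vertices: there we may give one half-edge a $2$-color instead, splitting each odd cycle. The bridged case is handled by reducing along a bridge, where the two sides are colored separately and colors are permuted to agree.
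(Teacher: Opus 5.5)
Your plan does not yet amount to a proof. The reduction to a full subdivision $S(H)$ carries essentially all the difficulty, and it is asserted rather than proved. The pendant step ends with ``should free $2_a$ or $2_b$''. The thread step breaks down in exactly the case you single out.

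Suppose $G/uv$ (with $uv$ contracted to $w$) is colored with $xw=1_a$ and $wy=1_b$. Let $x$ be a $3$-vertex whose other two edges are $1_b,2_a$, and $y$ a $3$-vertex whose other two edges are $1_a,2_b$; this is a legal coloring. Then:
\begin{itemize}
\item $uv$ is blocked from all four colors;
\item the only all-$1$ alternatives, $(xu,uv,vy)=(1_b,1_a,1_b)$ and $(1_a,1_b,1_a)$, are blocked at $x$ and at $y$ respectively;
\item $xu$ (resp.\ $vy$) could only move to $2_b$ (resp.\ $2_a$), which needs the edges one step beyond the other two neighbours of $x$ (resp.\ $y$) to avoid that color.
\end{itemize}
Nothing guarantees that last condition, and nothing in the plain $(1^2,2^2)$ hypothesis excludes such blocking patterns. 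So a real recoloring argument is missing. The pendant case, at least, can be bypassed as the paper does: colorability passes to subgraphs, and every $3$-irregular subcubic graph embeds in one with $\delta\ge 2$.

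The global step also needs repair, though partly in the opposite direction from what you expect. The parity obstacle you describe does not exist. $S(H)$ is bipartite, so if every $3$-vertex keeps exactly two $1$-colored half-edges, the $1$-colored subgraph has maximum degree $2$ and only even cycles; an odd cycle of $H-M$ has even length in $S(H)$. With a perfect matching $M$, giving the two halves of each $M$-edge the colors $2_a$ and $2_b$ already works. Your distance description is also inaccurate: the halves $xm$ and $m'z$ of edges $xy$ and $yz$ are at distance $3$, not $2$.

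The bridged case is genuinely unhandled. Deleting the bridge's midpoint and permuting colors on the two sides cannot work. If, on $x$'s side, the two edges at $x$ are $1_a,1_b$ and the next edges along them are $2_a,2_b$, the half-edge from $x$ to the midpoint has no admissible color under any permutation.

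You do not need Petersen's theorem here. Take a spanning tree $T$ of each component of $H$, and put each vertex's unique $2$-colored half-edge on the tree edge to its parent (the root uses the edge to one child). Two such half-edges are within distance $2$ only if their owners are adjacent in $T$. So the conflict graph lies in $T$ and is bipartite for every cubic multigraph $H$. Even so, the argument still depends on the thread reduction.

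The paper avoids local reductions altogether. It takes disjoint matchings $M_1,M_2$ with $|M_1\cup M_2|$ maximum and, subject to that, $G-M_1-M_2$ with the fewest components. Exchange arguments then show two things: every leftover component is a single edge or a $2$-edge path centered at a $2$-vertex, and the distance-$\le 2$ conflict graph on the leftover edges is bipartite. This treats threads of every length at once.
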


Our result is sharp in the sense that there exists $3$-irregular subcubic graphs that are not $(1,2^3)$-packing edge-colorable and $(1^2,2)$-packing edge-colorable, respectively.

\begin{example}
Let $G_1$ be the graph shown in Figure~\ref{example1}. We first note that $G_1$ is a $3$-irregular subcubic graph. We show that $G_1$ has no $(1,2^3)$-packing edge-coloring and no $(1^2,2)$-packing edge-coloring. 
\end{example}

\begin{proof}
Note that $G_1$ has six vertices and thus a maximum matching can have size at most three. Furthermore, $G_1$ only has two maximum matchings, i.e., $M_1 = \{uu_2, u_1u_5, u_3u_4\}$ and $M_2 = \{uu_5, u_1u_2, u_3u_4\}$. However, they are not disjoint matchings and thus a disjoint union of two matchings can contain at most five edges. Since every pair of edges in $G_1$ has distance at most two, each $2$-color can be used at most once. Since $G_1$ has seven edges, it has no $(1,2^3)$-packing edge-coloring and no $(1^2,2)$-packing edge-coloring.  
\end{proof}

\begin{figure}
\vspace{-5mm}
\begin{center}
\hspace{-8mm}
\includegraphics[scale=0.7]{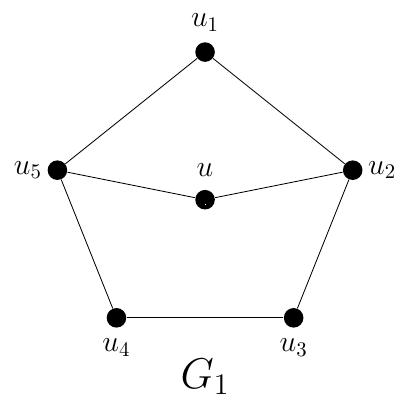} 
\includegraphics[scale=0.6]{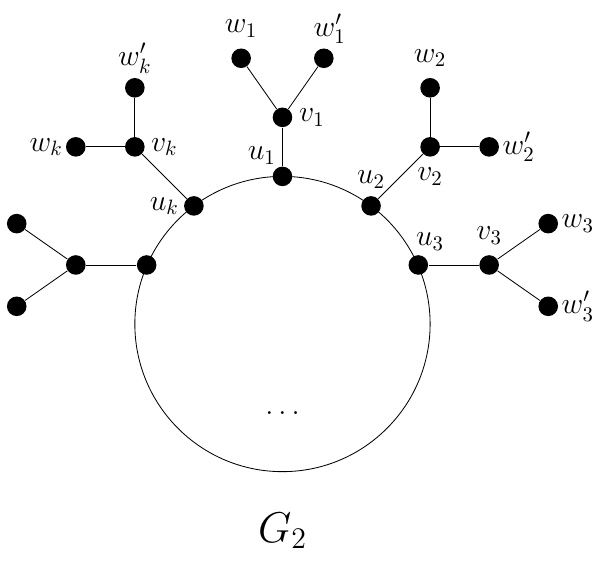} 
\includegraphics[scale=0.45]{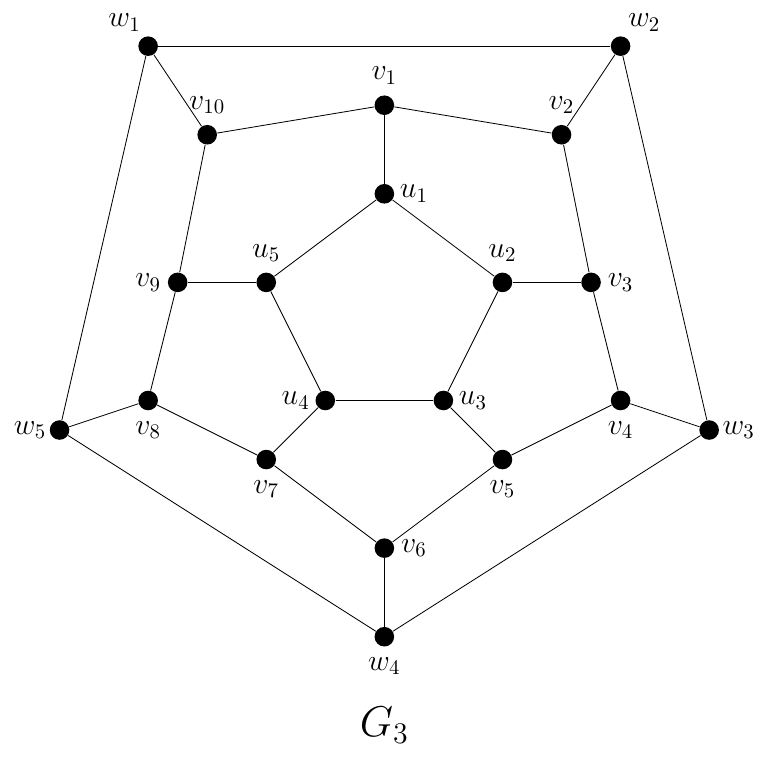}
\caption{Examples.}\label{example1}
\end{center}
\vspace{-8mm}
\end{figure}

The maximum average degree ($\mad(G)$) of a graph $G$, defined to be $\max\{\frac{2|E(H)|}{|V(H)|} \text{ }| \text{ }H \subseteq G\}$, is a different (to $d$-irregular graphs) way to measure the sparseness of a graph. We prove that if a subcubic graph has $\mad(G) < \frac{20}{9}$, then it has a $(1^2,2^2)$-packing edge-coloring.

\begin{thm}\label{theorem-2}
Every subcubic graph $G$ with $\mad(G) < \frac{20}{9}$ has a $(1^2,2^2)$-packing edge-coloring.
\end{thm}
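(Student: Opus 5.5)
We argue by minimal counterexample and discharging. Let $G$ be a counterexample to Theorem~\ref{theorem-2} minimizing $|V(G)|+|E(G)|$. Since $\mad$ does not increase under taking subgraphs, every proper subgraph of $G$ satisfies the hypothesis and so has a $(1^2,2^2)$-packing edge-coloring. In particular $G$ is connected and $\delta(G)\ge 1$. We then establish a list of reducible configurations. For each one we delete a few edges or vertices, color the rest by minimality, and extend the coloring.

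The extension uses the following counting. An uncolored edge $e=xy$ can receive the $1$-color $1_a$ if $1_a$ is absent from the edges incident to $x$ and $y$. It can receive the $2$-color $2_a$ if $2_a$ is absent from all edges within distance $2$ of $e$. In a subcubic graph, an edge touching vertices of degree $\le 2$ has few neighbouring edges, so there are few conflicts.

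The reducible configurations I would aim for are the following.
\begin{enumerate}
\item[(C1)] No vertex of degree $1$ is adjacent to a vertex of degree $\le 2$. Also, a $1$-vertex adjacent to a $3$-vertex needs special control.
\item[(C2)] There is no path of $2$-vertices of length $\ge 2$ (two adjacent $2$-vertices), or only short ones under restricted conditions.
\item[(C3)] Constraints on how many $2$-vertices or $1$-vertices a $3$-vertex can have as neighbours. For example, a $3$-vertex adjacent to three weak vertices is reducible.
\end{enumerate}
Here a $1$-vertex or a $2$-vertex is "weak". For each configuration, we uncolor the central edges and recolor greedily. A pendant edge $uv$ with $d(u)=1$ sees at most two other edges at $v$. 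If both $1$-colors appear there, it can still take a $2$-color unless both $2$-colors occur within distance $2$. In that case we swap a $1$-color on an edge at $v$ with an available $2$-color, and we use a Kempe-type switch of $1_a/1_b$ along an alternating path. The edge sets $1_a\cup 1_b$ form paths and cycles, which makes such switches safe for the $2$-colors.

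Discharging: give each vertex $v$ charge $d(v)-\frac{20}{9}$. The total charge is negative by $\mad(G)<\frac{20}{9}$. A $3$-vertex has $\frac{7}{9}$, a $2$-vertex has $-\frac{2}{9}$, and a $1$-vertex has $-\frac{11}{9}$. Rules:
\begin{enumerate}
\item[(R1)] Each $3$-vertex sends $\frac{2}{9}$ to each adjacent $2$-vertex, or $\frac{1}{9}$ to it when the $2$-vertex's other neighbour is also a $3$-vertex.
\item[(R2)] A $3$-vertex sends its whole surplus along a pendant edge. Pendant $1$-vertices must then be excluded, or almost excluded, by reducibility, since $\frac{7}{9}<\frac{11}{9}$.
\item[(R3)] Charge is passed through threads of $2$-vertices.
\end{enumerate}
The value $\frac{20}{9}$ suggests the extremal structure has roughly one $3$-vertex per $\frac{9}{?}$ pattern. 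For example, a $3$-vertex with its three neighbours each of degree $2$, with thread lengths making the average exactly $\frac{20}{9}$, is probably tight. The rules should then be tuned so that a $3$-vertex ends with charge $\ge 0$ exactly when it avoids the reducible configurations.

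I expect the main obstacle to be the reducibility lemmas. The $2$-colors have distance constraints, so a local recoloring can be blocked by colors at distance $2$. I would first try to show that a $1$-vertex adjacent to a $3$-vertex whose other neighbours are light is reducible, using the freedom to swap $1_a$ and $1_b$ on a component of $1_a\cup 1_b$. Once (C1)--(C3) are in place, the discharging check is a routine case analysis on the degrees of neighbours of $3$-vertices.
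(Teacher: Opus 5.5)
Your outline (minimal counterexample, reducible configurations, charge $d(v)-\frac{20}{9}$) has the right shape. However, the part you flag as the main obstacle is never carried out. The tool you propose for it is to extend an \emph{arbitrary} $(1^2,2^2)$-coloring of a smaller graph greedily, plus $1_a/1_b$ Kempe switches, and this fails on the first configuration you need.

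Let $u$ be a $1$-vertex with neighbour $u_1$, where $N(u_1)=\{u,u_2,u_3\}$ and $d(u_2)=d(u_3)=3$. The coloring of $G-u$ supplied by minimality may color $u_1u_2,u_1u_3$ with $1_a,1_b$. It may also color the other two edges at $u_2$, and likewise at $u_3$, with $2_a,2_b$; nothing in a plain packing coloring forbids a vertex from seeing both $2$-colors. Then $uu_1$ has no available color, and your repairs do not help:
\begin{itemize}
\item A $1_a/1_b$ switch is useless: the $1_a/1_b$-component through $u_1$ contains both $u_1u_2$ and $u_1u_3$, and the $2$-colored edges do not move.
\item Neither $u_1u_2$ nor $u_1u_3$ can be moved to a $2$-color, since each is adjacent to both a $2_a$-edge and a $2_b$-edge.
\end{itemize}
Any repair must recolor edges at distance two from $uu_1$, with no control over the resulting cascade. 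So you do not even get $\delta(G)\ge 2$, which your charge count needs ($\frac{7}{9}<\frac{11}{9}$). The same obstruction reappears when you fill in a $3$-thread whose two outer edges carry the same $1$-color.

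The missing idea is to strengthen the induction hypothesis. The paper proves that such a $G$ has a \emph{good} coloring:
\begin{itemize}
\item[(I)] no vertex sees both $2_a$ and $2_b$;
\item[(II)] no $2_a$-edge and $2_b$-edge have endpoints joined by a path of length two.
\end{itemize}
By (I), every $3$-vertex sees $1_a$, $1_b$ and exactly one $2$-color. By (II) applied to the path $u_2u_1u_3$, the vertices $u_2$ and $u_3$ see the same $2$-color, so $uu_1$ takes the other. Each extension must then re-verify (I) and (II), but the reductions become short. The paper needs only three: $\delta(G)\ge 2$, no $4$-thread, and no $3$-vertex on two $3$-threads and a third thread with at least two $2$-vertices.

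Your (C2) and (C3) instead try to exclude adjacent $2$-vertices and $3$-vertices with three weak neighbours, without argument. This is also unnecessary: $\frac{20}{9}$ is exactly the average degree when every $3$-vertex lies on threads of lengths $3,3,1$, so $3$-threads must be tolerated. With the paper's three configurations, a single rule finishes the discharging. Each $3$-vertex sends $\frac{1}{9}$ to every $2$-vertex on its incident threads. A $3$-vertex then sends at most $\frac{7}{9}$, since its thread lengths total at most $3+3+1$ or $3+2+2$, and each $2$-vertex receives $\frac{2}{9}$.
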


Hocquad, Lajou, and Lu\v zar~\cite{HLL2} conjectured that every subcubic planar graph has a $(1^2, 2^3)$-packing edge-coloring. Furthermore, they found a subcubic planar graph with girth $3$ that is not $(1^2,2^2)$-packing edge-colorable. We first provide two examples showing for every fixed positive integer $k$, there is a subcubic planar graph $H$ with girth at least $k$ such that $H$ has no $(1^2,2)$- and no $(1,2^3)$-packing edge-coloring. 

\begin{example}
Let $G_2$ be the graph in Figure~\ref{example1}, with girth equal to $k$, where $k$ is a fixed positive integer greater than or equal to $3$. We show it has no $(1^2,2)$- and no $(1,2^3)$-packing edge-coloring.      
\end{example}

\begin{proof}
We first show $G_2$ has no $(1^2,2)$-packing edge-coloring. By symmetry, the edges $u_ku_1, u_1u_2, u_2u_3$ can be colored with $1_a,1_b,1_a$ or $2, 1_a, 1_b$ or $1_a, 2, 1_a$ or $1_a, 2, 1_b$. We obtain a contradiction in each case. In the first case, $u_1v_1$ and $u_2v_2$ are both colored with $2$. In the second case, $u_2v_2$ and $u_1u_k$ are both colored with $2$. In the third and fourth cases, $u_1u_2$ is colored with $2$ and one edge incident with $v_1$ must be colored with $2$.

We next show $G_2$ has no $(1,2^3)$-packing edge-coloring. If $u_1u_2$ is colored with $1$, then $u_1v_1, u_1u_k, u_2v_2, u_2u_3$ must receive distinct $2$-colors. This is a contradiction and thus $u_1u_2$ must receive a $2$-color, say $2_a$. Then one of $u_2v_2$ and $u_2u_3$ must be colored with $1$. Suppose not, say $u_2v_2, u_2u_3$ are colored with $2_b, 2_c$. Then $u_1v_1, u_1u_k$ must both use color $1$. This is a contradiction. Say $u_2v_2$ is colored with $1$ (the proof for $u_2u_3$ is colored with $1$ is very similar) and $u_2u_3$ is colored with $2_b$. However, $v_2w_2$ and $v_2w_2'$ must be both colored with $2_c$. This is again a contradiction.
\end{proof}

We are interested in the question "what is the minimum positive integer $g$ such that every subcubic planar graph with girth at least $g$ is $(1^2,2^2)$-packing edge-colorable?". By Euler's formula, every planar graph with girth at least $\hat{g}$ has maximum average degree less than $\frac{2\hat{g}}{\hat{g}-2}$. We obtain the following corollary of Theorem~\ref{theorem-2} for subcubic planar graphs, which implies $g$ is finite and in fact $g \le 20$.

\begin{cor}\label{maincor}
Every subcubic planar graph with girth at least $20$ has a $(1^2,2^2)$-packing edge-coloring.    
\end{cor}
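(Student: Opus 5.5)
The corollary should follow directly from Theorem~\ref{theorem-2}; the only thing to check is that girth at least $20$ forces $\mad(G) < \frac{20}{9}$. Let $G$ be a subcubic planar graph with girth at least $20$ and let $H \subseteq G$ be any subgraph. We want $\frac{2|E(H)|}{|V(H)|} < \frac{20}{9}$. We may assume $H$ is connected, since the average degree of a disjoint union is at most the maximum over its components. If $H$ is a forest, then $|E(H)| \le |V(H)|-1$, so its average degree is below $2 < \frac{20}{9}$. Otherwise $H$ contains a cycle, and every cycle of $H$ has length at least $20$ because $H$ inherits the girth bound from $G$.

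In the cyclic case we use Euler's formula on a plane embedding of $H$: $|V(H)| - |E(H)| + |F(H)| = 2$. Each face boundary walk has length at least $20$, so $2|E(H)| \ge 20|F(H)|$. Substituting gives
\[
|E(H)| \le \frac{20}{18}\,(|V(H)|-2) < \frac{10}{9}|V(H)|,
\]
and therefore $\frac{2|E(H)|}{|V(H)|} < \frac{20}{9}$. This is the general bound $\mad < \frac{2\hat g}{\hat g-2}$ quoted before the corollary, evaluated at $\hat g = 20$. Taking the maximum over all $H$ gives $\mad(G) < \frac{20}{9}$.

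Since $G$ is subcubic with $\mad(G) < \frac{20}{9}$, Theorem~\ref{theorem-2} yields a $(1^2,2^2)$-packing edge-coloring of $G$.

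The only delicate point is justifying that every face has boundary walk of length at least $20$ when $H$ is not a tree. In a connected plane graph that contains a cycle, each face boundary contains a cycle, so the length of its boundary walk is at least the girth. If one prefers to avoid this, the same inequality follows by reducing to the $2$-core of $H$: deleting vertices of degree at most $1$ does not decrease the average degree when that average is at least $2$, and in a $2$-edge-connected piece every face is bounded by cycles.
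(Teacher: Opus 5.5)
Your proposal is correct and follows the paper's route: the paper cites the Euler-formula bound $\mad(G) < \frac{2\hat g}{\hat g - 2}$ for planar graphs of girth at least $\hat g$, which equals $\frac{20}{9}$ at $\hat g = 20$, and then applies Theorem~\ref{theorem-2}; you have written out that Euler computation in full, including the forest case and the face-boundary justification. Your closing aside about the $2$-core is slightly loose, since a $2$-core need not be $2$-edge-connected, but your main argument does not depend on it.
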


We provide the following examples, which shows $g \ge 6$.

\begin{example}
Let $G_3$ be the graph in Figure~\ref{example1} with girth $5$. We show it has no $(1^2,2^2)$-packing edge-coloring.   
\end{example}

\begin{proof}
First note that each $2$-color can be used at most once on the $5$-cycle $C = u_1u_2u_3u_4u_5u_1$. If both $2_a, 2_b$ are used on $C$, then they cannot be adjacent, since otherwise, say $u_1u_2, u_2u_3$ are colored with $2_a, 2_b$, each of $v_2v_3, v_3v_4, u_2v_3$ can only use $1_a$ or $1_b$. This is a contradiction. Therefore, we may assume $u_1u_2, u_3u_4$ are colored with $2_a, 2_b$. However, each of $u_1u_5, u_4u_5, u_5v_9$ can only use $1_a$ or $1_b$. This is again a contradiction. 

Hence, at most one $2$-color can be used on $C$. Since $C$ is a $5$-cycle, it must use exactly one $2$-color, say $2_a$ at $u_1u_2$, and $u_2u_3, u_3u_4, u_4u_5, u_5u_1$ is colored with $1_a, 1_b, 1_a, 1_b$. Then $u_3v_5, u_4v_7$ must be colored with $2_b, 2_a$. However, each of $v_5v_6, v_6v_7, v_6w_4$ can only use $1_a$ or $1_b$. This is a contradiction. 
\end{proof}

We say a vertex $u$ {\em sees} a color $x$ if there is an edge colored with $x$ and $u$ is one of its endpoint. We prove Theorem~\ref{theorem-1} in Section 2 and Theorem~\ref{theorem-2} in Section 3. 

\section{Proof for Theorem~\ref{theorem-1}}

We first show how we deal with multi-edges after we proved the statement ``Every $3$-irregular subcubic simple graph has a $(1^2, 2^2)$-packing edge-coloring.''. Suppose to the contrary that there are $3$-irregular subcubic multigraphs that are not $(1^2,2^2)$-packing edge-colorable. We take a graph $G$ such that $|V(G)|$ is minimum among all counterexamples. We use $1_a, 1_b, 2_a, 2_b$ to denote the colors of a $(1^2, 2^2)$-packing edge-coloring.

Since $G$ is $3$-irregular subcubic with no loop, three parallel edges do not exist. Suppose $e_1$ and $e_2$ are two edges with the same endpoints $u_1,u_2$. If both of $u_1$ and $u_2$ have degree two, then we are done. If $u_1$ has degree three and $u_2$ has degree two, then $G' = G - \{u_1, u_2\}$ has a $(1^2,2^2)$-packing edge-coloring $f$ since $G$ is a minimum counterexample. Let $N(u_1) = \{v_1, u_2\}$. Since $G$ is $3$-irregular, $v_1$ has degree at most two and has no parallel edges. If $v_1$ is a $1$-vertex, then we are done. Thus, $v_1$ is a $2$-vertex and let $N(v_1) = \{u_1, v_2\}$. We can extend $f$ to $G$ to obtain a contradiction. If $f(v_1v_2) \in \{1_a, 1_b\}$, say $f(v_1v_2) = 1_a$, then we color $u_1v_1, e_1, e_2$ with $1_b, 2_a, 2_b$. Otherwise, $f(v_1v_2) \in \{2_a, 2_b\}$, say $f(v_1v_2) = 2_a$. Then we color $u_1v_1, e_1, e_2$ with $1_a, 1_b, 2_b$.

We now prove the main statement: Every $3$-irregular subcubic simple graph has a $(1^2, 2^2)$-packing edge-coloring. Let $G$ be a $3$-irregular subcubic simple graph. We may assume $G$ is connected since otherwise we can provide a $(1^2, 2^2)$-packing edge-coloring of $G$ by coloring each component of $G$. We may also assume $\delta(G) \ge 2$ since every $3$-irregular subcubic simple graph is a subgraph of a $3$-irregular subcubic simple graph with minimum degree two. We use $1_a, 1_b, 2_a, 2_b$ to denote the colors of a $(1^2, 2^2)$-packing edge-coloring.

Take two disjoint matchings $M_1, M_2$ of $G$ such that 
\begin{equation}\label{condition-1}
|M_1 \cup M_2| \text{ is maximum among all combinations of two disjoint matchings.} 
\end{equation}
Let $G_{M_1,M_2}$ be the graph induced by the edges of $G-M_1-M_2$. We take $M_1,M_2$ subject to (1) and further satisfy the following condition: 
\begin{equation}\label{condition-2}
\text{The graph } G_{M_1,M_2} \text{ has minimum number of connected components.}  
\end{equation}

\begin{lemma}\label{p2p3}
Each component of $G_{M_1,M_2}$ can either be a $P_2$ or $P_3$. Furthermore, if a component is a $P_3$, then the middle vertex of this $P_3$ has degree two in $G$.   
\end{lemma}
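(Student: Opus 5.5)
The proof is a local exchange argument. Write $H=G_{M_1,M_2}$ for the graph of uncovered edges, and for a vertex $v$ let $d_H(v)=d_G(v)-(\text{number of } M_i \text{ covering } v)$. The first observation comes straight from (\ref{condition-1}): if $vw\in E(H)$ and neither $v$ nor $w$ is covered by $M_i$, then $M_i+vw$ is a matching disjoint from $M_{3-i}$, contradicting maximality. So \emph{every edge of $H$ has an endpoint covered by $M_1$ and an endpoint covered by $M_2$}. The goal is to show that every vertex $y$ with $d_H(y)\ge 2$ satisfies $d_G(y)=2$ and has both $H$-neighbours of $H$-degree $1$. This forces each component of $H$ to be a single edge or a $P_3$ whose middle vertex has degree two in $G$; in particular it rules out cycles and longer paths.

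First, $d_H(y)=3$ is impossible. Then $y$ is an uncovered $3$-vertex, so each neighbour $x$ of $y$ is covered by both $M_1$ and $M_2$, and together with $xy$ this gives $d_G(x)=3$. That contradicts $3$-irregularity.

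Next take $d_H(y)=2$ with $H$-neighbours $x,z$. If $d_G(y)=2$, then $y$ is uncovered, so $x$ and $z$ are each covered by both matchings. Hence $d_G(x)=d_G(z)=3$ with $d_H(x)=d_H(z)=1$, and the component of $H$ is the $P_3$ $xyz$ with middle vertex of degree two, as required. The remaining case is $d_G(y)=3$ and $d_H(y)=2$, and ruling it out is the main obstacle. Here $y$ is covered by exactly one matching, say $yu\in M_1$, and misses $M_2$. So $x$ and $z$ are covered by $M_2$. By $3$-irregularity $d_G(x)=d_G(z)=2$, so $x$ and $z$ each have one $M_2$-edge and one $H$-edge, and both miss $M_1$.

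In this case I would do the swap $M_1'=M_1-yu+xy$. This is a matching because $x$ misses $M_1$, it is still disjoint from $M_2$, and $|M_1'\cup M_2|$ is unchanged. In the new leftover graph $H'$, $x$ disappears: its only $H$-edge was $xy$. The vertex $y$ keeps $yz$ and gains $yu$.
\begin{itemize}
\item If $u$ already lies in another component of $H$, then that component merges with the component of $y$. This lowers the number of components and contradicts (\ref{condition-2}).
\item If $u$ lies in the same component, or is not in $H$ at all, the component count does not drop, and I would iterate. Note $d_G(u)\le 2$, so in the latter case $u$'s other edge lies in $M_2$. I would follow the alternating $M_1/M_2$ path starting $y,u,\dots$ and perform a Kempe-type interchange of $M_1$ and $M_2$ on it, or the symmetric swap at $z$.
\end{itemize}
The aim of the iteration is to land in one of three outcomes: an edge of $H$ can be added to a matching, contradicting (\ref{condition-1}); two components of $H$ merge, contradicting (\ref{condition-2}); or some $H$-edge has an endpoint pattern violating the first observation.

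The delicate part is the bookkeeping of components under these exchanges, especially when $u$ and $y$ lie in the same component of $H$ or when the alternating path returns to $x$ or $z$. I expect to handle this by choosing, among all swaps, one that moves $y$'s matching edge to whichever of $x$ or $z$ makes the leftover edge at $y$ attach to a vertex outside its current component. The $3$-irregularity, which forces the degree-$2$/degree-$3$ alternation along these paths, is what should make one such choice always available.
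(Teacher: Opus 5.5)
\textbf{There is a genuine gap in the remaining case.}

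Your first two cases are correct, and cleaner than the paper's. The observation that every leftover edge has one endpoint covered by $M_1$ and one covered by $M_2$ disposes of $d_H(y)=3$ and of the degree-two middle vertex at once. Phrasing the goal in terms of vertices of leftover degree $2$ also rules out cycles, including triangles, automatically. Your first bullet is also fine. Note that the ``same component'' subcase is vacuous: the component of $y$ is exactly the $P_3$ $xyz$, and $u\notin\{x,z\}$.

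The gap is the case $d_G(y)=3$, $yu\in M_1$, where $u$ is a $2$-vertex whose other edge lies in $M_2$. Here you only describe an iteration of swaps and say you \emph{expect} a suitable choice to be always available. You give no argument that the process terminates or that it reaches one of your three outcomes. So the hardest case of the lemma is not proved.

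The missing idea is to look at $G[M_1\cup M_2]$ itself rather than tracking components of the leftover graph. This graph has maximum degree at most $2$, so its components are paths and even cycles. Interchanging $M_1$ and $M_2$ on a single component keeps two disjoint matchings with the same union.

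In your configuration, $x$ and $z$ are covered only by $M_2$ and $y$ only by $M_1$. So all three have degree exactly $1$ in $G[M_1\cup M_2]$, i.e.\ they are endpoints of path components. A path has only two endpoints, so one of $x,z$, say $x$, is not on the path $P$ through $y$. Interchange $M_1$ and $M_2$ on $P$. Now $y$ misses $M_1$, and $x$ still misses $M_1$. Hence $xy$ can be added to $M_1$, contradicting \eqref{condition-1} directly; condition \eqref{condition-2} and any iteration are unnecessary.

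This is essentially the paper's argument. The paper interchanges on the component containing the $M_2$-edge at a degree-two endpoint and adds that endpoint's leftover edge to $M_2$. It concludes that all three leaves would lie on one path, which is impossible. Your worry about ``the alternating path returning to $x$ or $z$'' is exactly where this should have been settled: the path through $y$ can end at most one of them, and the other gives the contradiction.
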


\begin{proof}
We first show each component of $G_{M_1,M_2}$ must have maximum degree at most two. Suppose not, i.e., there is a vertex $u$ of degree three. Let $N(u) = \{u_1, u_2, u_3\}$ and $N(u_i) = \{u, v_i\}$, where $i \in [3]$. If $u_1v_1 \in G-M_1-M_2$, then we add $uu_1$ to $M_1$ to obtain a contradiction with Condition~\eqref{condition-1}. We may assume $u_1v_1 \in M_1$. However, we can add $uu_1$ to $M_2$, which is again a contradiction with Condition~\eqref{condition-1}.

Suppose there is a $P_4$, $u_1u_2u_3u_4$, in a component of $G_{M_1,M_2}$. One of $u_2$ and $u_3$ must be a $3$-vertex since otherwise we can add $u_2u_3$ to $M_1$, which contradicts Condition~\eqref{condition-1}. Thus, we may assume $u_2$ is a $3$-vertex and both $u_1,u_3$ are $2$-vertices since $G$ is a $3$-irregular subcubic simple graph. Say $N(u_2) = \{u_1,u_3,u_5\}$ and $u_2u_5 \in M_1$. Then we add $u_2u_3$ to $M_2$, which again contradicts Condition~\eqref{condition-1}.

Suppose there is a $P_3$, $u_1u_2u_3$, and the middle vertex $u_2$ has degree three. Let $N(u_2) = \{u_1, u_3, v_2\}$. Since $G$ is $3$-irregular, $u_1,u_3$ are both $2$-vertices. Let $N(u_1) = \{u_2, v_1\}$ and $N(u_3) = \{u_2, v_3\}$. Since $\Delta(G_{M_1,M_2}) \le 2$, $u_2v_2 \in M_1 \cup M_2$. Say $u_2v_2 \in M_1$. Then $u_1v_1 \in M_2$ since otherwise we add $u_1u_2$ to $M_2$ and will obtain a contradiction with Condition~\eqref{condition-1}. Similarly, $u_3v_3 \in M_2$. Furthermore, $u_1v_1$ and $u_2v_2$ must be in the same component of $G[M_1 \cup M_2]$ since otherwise, say $u_1v_1$ is in component $G_1$ and $u_2v_2$ is in component $G_2$ of $G[M_1 \cup M_2]$, we switch the $M_1$ and $M_2$ edges in $G_1$ and add $u_1u_2$ to $M_2$, which is a contradiction with Condition~\eqref{condition-1}. Similarly, $u_2v_2$ and $u_3v_3$ must be in the same component of $G[M_1 \cup M_2]$. However, $u_1,u_2,u_3$ are three leaves in a component of $G[M_1 \cup M_2]$ and it contradicts the fact that $\Delta(G_{M_1,M_2}) \le 2$.
\end{proof}

Let $H$ be the graph with $V(H) = E(G)-M_1-M_2$ and $E(H) = \{e_1e_2: e_1, e_2 \in V(H) \text{ and } dist_G(e_1, e_2) \le 2 \}$. We first show two $P_3$s in $G-M_1-M_2$ cannot be in the same component of $H$.

\begin{lemma}\label{notwop3-1}
Two $P_3$s in $G-M_1-M_2$ cannot be joined by an edge in $M_1 \cup M_2$.
\end{lemma}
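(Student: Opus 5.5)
Let $P=x_1x_2x_3$ and $Q=y_1y_2y_3$ be two $P_3$ components of $G_{M_1,M_2}$, and suppose an edge $e\in M_1\cup M_2$ joins a vertex of $P$ to a vertex of $Q$; say $e\in M_1$. By Lemma~\ref{p2p3}, $x_2$ and $y_2$ are $2$-vertices of $G$. Each of $x_2,y_2$ already has both of its edges in $P$ or $Q$, so neither can be an endpoint of $e$. Hence $e$ joins an endpoint of $P$ to an endpoint of $Q$, and by symmetry $e=x_3y_1$. A leaf of a $P_3$ component has degree at most $3$ in $G$ and has exactly one edge in $G_{M_1,M_2}$, so it has at most two edges in $M_1\cup M_2$. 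I will derive a contradiction with Condition~\eqref{condition-1} or Condition~\eqref{condition-2} by a local re-routing of the matchings.

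\textbf{Key step: swap $e$ into $G_{M_1,M_2}$ and pull two path edges into the matchings.} First I look at $x_3$. Since $x_2x_3\notin M_1\cup M_2$, the edge $x_2x_3$ cannot be added to $M_2$. As $x_2$ is a $2$-vertex whose edges are both unmatched, the only obstruction is that $x_3$ already sees $M_2$. So $x_3$ is a $3$-vertex with one $M_1$-edge $e$ and one $M_2$-edge $x_3z$. The same argument applied to $x_2x_3$ and $M_1$ just uses $e$. Symmetrically, $y_1$ is a $3$-vertex incident with $e\in M_1$ and with an $M_2$-edge $y_1w$. But now $x_3$ and $y_1$ are adjacent $3$-vertices, which contradicts $3$-irregularity.

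So most of the work is reduced to checking that the degrees are forced. Concretely:
\begin{itemize}
\item[(a)] If $x_3$ does not see $M_2$, then $M_2+x_2x_3$ is a matching disjoint from $M_1$, which contradicts Condition~\eqref{condition-1}. The same holds for $y_1$ and $y_1y_2$.
\item[(b)] Therefore $x_3$ is incident with $x_2x_3$, with $e$, and with an $M_2$-edge that differs from $e$ because the matchings are disjoint. So $d_G(x_3)=3$, and likewise $d_G(y_1)=3$.
\item[(c)] Since $e=x_3y_1$ joins two $3$-vertices, $G$ is not $3$-irregular, a contradiction.
\end{itemize}
The case $e\in M_2$ is identical, with the roles of $M_1$ and $M_2$ exchanged.

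The main obstacle I anticipate is justifying that the endpoint of $e$ on each path must be a leaf rather than the middle vertex. This relies on Lemma~\ref{p2p3}, which says the middle vertex of a $P_3$ component has degree two, so all of its edges lie in $G_{M_1,M_2}$.

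There are two further points I would check. First, if the same vertex were an endpoint of both paths, then $P$ and $Q$ would lie in one component, so that case cannot occur. Second, the lemma must also cover the reading in which $P$ and $Q$ are the same component, that is, $e$ joins $x_1$ and $x_3$. In that case the argument above applies verbatim to the two leaves: both are forced to be $3$-vertices, and they are adjacent through $e$, again contradicting $3$-irregularity. If the intended statement instead concerns distance via an $M_1\cup M_2$ edge in $H$, the same degree-forcing argument shows every leaf of a $P_3$ that is adjacent to a matched edge sees both matchings. This forces the relevant endpoints to be $3$-vertices and adjacent, which gives the same contradiction.
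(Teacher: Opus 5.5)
Your proof is correct and is essentially the paper's argument. The paper also uses the augmentation $M_2+u_2u_3$ to force the endpoint $u_3$ of the joining $M_1$-edge to be a $3$-vertex. It then invokes $3$-irregularity to make $v_1$ a $2$-vertex and augments with $v_1v_2$, which is your symmetric ``both endpoints are $3$-vertices'' contradiction in a different order. Your extra checks fill in points the paper leaves implicit: that the middle vertices cannot be endpoints of $e$, and the case $P=Q$. Your heading about swapping $e$ into $G_{M_1,M_2}$ does not describe what you actually do and can be dropped.
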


\begin{proof}
Suppose not, i.e., $u_1u_2u_3$ and $v_1v_2v_3$ are two $P_3$s in $G-M_1-M_2$ such that $u_3v_1 \in M_1 \cup M_2$. Say $u_3v_1 \in M_1$. By Lemma~\ref{p2p3}, $u_2$ and $v_2$ are $2$-vertices. If $u_3$ is a $2$-vertex, then we add $u_2u_3$ to $M_2$ to obtain a contradiction with Condition~\ref{condition-1}. Therefore, $u_3$ is a $3$-vertex. Since $G$ is $3$-irregular, $v_1$ is a $2$-vertex. We add $v_1v_2$ to $M_2$ to obtain a contradiction with Condition~\ref{condition-1}.
\end{proof}

\begin{lemma}\label{notwop3-2}
If $u_1u_2u_3$ and $v_1v_2v_3$ are two $P_3$s, then $u_1u_2,u_2u_3$ and $v_1v_2, v_2v_3$ are not in the same connected component of $H$.
\end{lemma}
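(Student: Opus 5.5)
We argue by contradiction. Suppose $u_1u_2u_3$ and $v_1v_2v_3$ are two $P_3$ components of $G_{M_1,M_2}$ whose four edges lie in one component of $H$. By Lemma~\ref{p2p3}, every component of $G_{M_1,M_2}$ is a $P_2$ or a $P_3$. Two edges of $G-M_1-M_2$ in different components of $G_{M_1,M_2}$ are at distance exactly $2$ in $G$ precisely when they are joined by an edge of $M_1\cup M_2$. Hence a component of $H$ is a chain of components of $G_{M_1,M_2}$ (each a $P_2$ or $P_3$), consecutive ones joined by a connecting edge of $M_1\cup M_2$.

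We would first treat the case where the two $P_3$s are adjacent in this chain. Then some edge $xy\in M_1\cup M_2$ has $x$ an endpoint of $u_1u_2u_3$ and $y$ an endpoint of $v_1v_2v_3$, which is exactly what Lemma~\ref{notwop3-1} forbids. The only extra check is that a connecting edge touching an interior vertex cannot occur, because $u_2$ and $v_2$ are $2$-vertices all of whose edges lie in $G-M_1-M_2$.

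In general, take a shortest path in $H$ between an edge of the first $P_3$ and an edge of the second. It passes through a sequence of $P_2$ components $w^1_1w^1_2,\dots,w^t_1w^t_2$ with $t\ge 1$, linked by edges $e_0,e_1,\dots,e_t\in M_1\cup M_2$. We would run the argument of Lemma~\ref{notwop3-1} along this chain, using augmenting and switching operations.

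1. Since $u_2$ is a $2$-vertex and $u_2u_3$ cannot be added to the matching of $M_1$ or $M_2$ missing at $u_3$, the vertex $u_3$ is a $3$-vertex seeing both $M_1$ and $M_2$. By $3$-irregularity its $M_1\cup M_2$-neighbours are $2$-vertices.
2. Let $x$ be the far endpoint of $e_0$, say $e_0\in M_1$; then $x$ is an endpoint of the first $P_2$. Because $x$ is a $2$-vertex whose other edge is in $G-M_1-M_2$, $x$ misses $M_2$.
3. We then want to recolor: move $u_2u_3$ into $M_1$, push $e_0$ out, and propagate along the chain. Concretely, we form the alternating sequence $u_2u_3,\ e_0,\ w^1_1w^1_2,\ e_1,\dots$ ending at $v_1v_2$. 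Toggling membership along it (each $P_2$ edge enters the matching vacated by the neighbouring connecting edge) either strictly increases $|M_1\cup M_2|$, contradicting Condition~\eqref{condition-1}, or keeps the size but merges or removes components of $G_{M_1,M_2}$, contradicting Condition~\eqref{condition-2}.

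The \textbf{key point} is the endgame. At the $v$-side, $v_2$ is a $2$-vertex, so adding $v_1v_2$ or $v_2v_3$ is blocked only at $v_1$ or $v_3$. The net effect of the switch is to replace two $P_3$s and $t$ $P_2$s by fewer components, or to gain an edge.

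The \textbf{main obstacle} is verifying that each toggle keeps $M_1$ and $M_2$ matchings. The toggled edges at each $P_2$ may conflict with other $M_1\cup M_2$ edges at the degree-$3$ endpoints of that $P_2$. To handle this, we would exploit that each $P_2$ endpoint meeting a connecting edge is either a $2$-vertex, and so free, or a $3$-vertex whose neighbours are $2$-vertices. Where a conflict remains, we would first swap $M_1$ and $M_2$ on a component of $G[M_1\cup M_2]$, as in the proof of Lemma~\ref{p2p3}, so that consecutive connecting edges alternate between $M_1$ and $M_2$. If the direct propagation fails, the fallback is to show that the shortest chain has $t=0$, by proving that a single $P_2$ flanked by two connecting edges already admits an augmentation. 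This reduces everything to Lemma~\ref{notwop3-1}.
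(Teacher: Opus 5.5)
You have the right strategy: toggling membership along $u_2u_3,e_0,C_1,e_1,\dots,C_t,e_t,v_1v_2$ does prove the lemma. But the proposal stops where the proof has to happen. You never show that the toggled sets are again two disjoint matchings; you list this as the ``main obstacle'' and offer only tentative fallbacks, so there is a genuine gap. Your assignment rule, ``each $P_2$ edge enters the matching vacated by the neighbouring connecting edge'', is ambiguous and can fail. Suppose $C_i=xy$, where $x$ is a $3$-vertex carrying $e_{i-1}\in M_1$ and an off-chain edge $f\in M_2$, and $e_i\in M_2$ sits at $y$. Giving $xy$ the class vacated by $e_i$ clashes with $f$.

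The fallbacks do not close this. Swapping on components of $G[M_1\cup M_2]$ is not shown to accomplish anything. A $P_2$ flanked by two connecting edges admits no local augmentation, by \eqref{condition-1}. The augmentation exists only because the chain ends at $u_2$ and $v_2$, which meet no edge of $M_1\cup M_2$. Reducing a general $t$ to $t\le 1$ is exactly what the paper's induction supplies: $k=0$ is Lemma~\ref{notwop3-1}. For $k=1$ it removes $u_3w_1,w_2v_1$ and adds $u_2u_3,w_1w_2,v_1v_2$. For $k\ge 2$ the size-preserving swap $u_3w_1\to u_2u_3$, $w_4v_1\to v_1v_2$ yields a new optimal pair whose two $P_3$s are separated by $k-2$ $P_2$s. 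Your sketch has no such reduction. Your dichotomy ``the size grows, or it stays and \eqref{condition-2} fails'' also shows the bookkeeping was not done: the full toggle removes $t+1$ edges and adds $t+2$, so it always gains exactly one.

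What is missing is a short count; with it your route is complete and simpler than the paper's induction. Take a closest pair of $P_3$s, so that the path of distinct components $C_0,C_1,\dots,C_t,C_{t+1}$ has only $P_2$s inside. (Your shortest path could otherwise pass through a third $P_3$.) Let $e_j\in M_1\cup M_2$ join $C_j$ to $C_{j+1}$.
\begin{itemize}
\item The added edges lie in distinct components, so they are pairwise vertex-disjoint, and each can be assigned a class independently.
\item For $C_i=xy$, every other edge at $x$ or $y$ lies in $M_1\cup M_2$. By $3$-irregularity there are at most $d(x)+d(y)-2\le 3$ of them, and two of them, $e_{i-1}$ and $e_i$, are removed. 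So at most one survives, and one class is free for $xy$.
\item For $u_2u_3$: the vertex $u_2$ is unsaturated, and $u_3$ keeps at most one matching edge besides $e_0$. The same holds for $v_1v_2$.
\end{itemize}
Hence the toggle produces two disjoint matchings covering one more edge, contradicting \eqref{condition-1} directly. This avoids \eqref{condition-2}, component swaps, and the need, present in the paper's induction, to recheck that each intermediate configuration is still optimal.
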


\begin{proof}
Suppose not, i.e., $u_1u_2,u_2u_3$ and $v_1v_2, v_2v_3$ are in the same component. By Lemma~\ref{notwop3-1}, $u_1u_2u_3$ and $v_1v_2v_3$ cannot be joined by an edge in $M_1 \cup M_2$. Since $u_1u_2,u_2u_3$ and $v_1v_2, v_2v_3$ are in the same component in $H$, $u_1u_2u_3$ and $v_1v_2v_3$ must be connected by a series of $P_2$s in $G-M_1-M_2$ via edges in $M_1 \cup M_2$. We prove the statement ``there cannot be two $P_3$s connected by $k$ $P_2$s'' by induction, where $k \ge 0$ and is an integer.

The case when $k = 0$ are covered by Lemma~\ref{notwop3-1}. The remaining base case is when $k = 1$, i.e., $u_1u_2u_3$ and $v_1v_2v_3$ are connected by a $P_2$, say $w_1w_2$, with $u_3w_1, w_2v_1  \in M_1 \cup M_2$. Say $u_3w_1 \in M_1$. By Lemma~\ref{p2p3}, $u_2$ and $v_2$ are both $2$-vertices. If $u_3$ is a $2$-vertex, then we add $u_2u_3$ to $M_2$ and obtain a contradiction with Condition~\ref{condition-1}. Therefore, $u_3$ must be a $3$-vertex. Similarly, $v_1$ must be a $3$-vertex. Let $N(u_3) = \{u_2, w_1, u_3'\}$ and $N(v_1) = \{w_2, v_2, v_1'\}$. We know $u_3u_3' \in M_2$. Since $G$ is $3$-irregular, both $w_1$ and $w_2$ are $2$-vertices. Therefore, $w_2v_1 \in M_2$ since otherwise we can add $w_1w_2$ to $M_2$ to obtain a contradiction with Condition~\ref{condition-1}. We also know $v_1v_1' \in M_1$. Then we can delete $u_3w_1$ from $M_1$, $w_2v_1$ from $M_2$, and add $u_2u_3$ to $M_1$, $v_1v_2$ to $M_2$, and add $w_1w_2$ to $M_1$. This is a contradiction with Condition~\ref{condition-1} since $M_1 \cup M_2$ becomes larger.

We assume the statement is already proved for $k \le \ell -1$ and now prove the case when $k = \ell$. Suppose $u_1u_2u_3$ and $v_1v_2v_3$ are connected by $\ell$ $P_2$s, where $\ell \ge 2$. Let the first (starting from $u_1u_2u_3$) $P_2$ connecting $u_1u_2u_3$ and $v_1v_2v_3$ be $w_1w_2$ and the last $P_2$ be $w_3w_4$, with $u_3w_1, w_4v_1 \in M_1 \cup M_2$. By a similar argument, we know $u_3,v_1$ are $3$-vertices and $u_2,w_1,w_4,v_2$ are $2$-vertices. Let $u_3w_1 \in M_x$ and $w_4v_1 \in M_y$, where $x,y \in [2]$. We delete $u_3w_1$ from $M_x$, $w_4v_1$ from $M_y$, and add $u_2u_3$ to $M_x$, $v_1v_2$ to $M_y$. Then we obtained two $P_3$s $u_3w_1w_2$ and $w_3w_4v_1$ connected by $2$ fewer $P_2$s, which contradicts with the inductive hypothesis.
\end{proof}

\begin{lemma}\label{nop3inoddcycle}
If $u_1u_2u_3$ is a $P_3$ in $G-M_1-M_2$, then $u_1u_2, u_2u_3$ cannot be together in a cycle of $H$.
\end{lemma}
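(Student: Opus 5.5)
The plan is to argue by contradiction and reduce any cycle of $H$ through both $u_1u_2$ and $u_2u_3$ to a ``chain'' configuration. I would then kill that configuration by the same exchange-and-induction technique used in the proof of Lemma~\ref{notwop3-2}.

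\textbf{Reduction to a chain.} By Lemma~\ref{p2p3}, $u_2$ is a $2$-vertex of $G$, so its only edges are $u_1u_2$ and $u_2u_3$. Suppose $C$ is a cycle of $H$ containing both. Removing the $H$-edge between them leaves a path $P$ in $H$ from $u_1u_2$ to $u_2u_3$. By Lemma~\ref{notwop3-2}, every interior vertex of $P$ is an edge of a $P_2$-component of $G-M_1-M_2$. Two edges of $G-M_1-M_2$ at distance at most two are separated by at most one edge, which lies in $M_1\cup M_2$. Hence $P$ translates into a sequence
\[
u_1 \text{ --}M\text{-- } w_1w_2 \text{ --}M\text{-- } \cdots \text{ --}M\text{-- } u_3
\]
of $k\ge 0$ $P_2$s linked by edges of $M_1\cup M_2$. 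The edge leaving the $P_3$ must attach at $u_1$ or $u_3$, since $u_2$ has no $M$-edge. The first step of the proof is to justify this translation carefully, including the degenerate cases where consecutive links share an endpoint. I expect this bookkeeping to be the main obstacle, as it is in the previous lemma.

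\textbf{Base cases.} For $k=0$ we have $u_1u_3\in M_1\cup M_2$, say $u_1u_3\in M_1$. If $u_1$ were a $2$-vertex, then $u_1u_2$ could be added to $M_2$, contradicting Condition~\eqref{condition-1}; the same holds for $u_3$. So $u_1$ and $u_3$ are adjacent $3$-vertices, contradicting $3$-irregularity.

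For $k=1$, the same argument shows that $u_1$ and $u_3$ are $3$-vertices, so $w_1$ and $w_2$ are $2$-vertices. Write $u_1w_1\in M_x$ and $w_2u_3\in M_y$.
\begin{itemize}
\item If $x=y$, then neither $w_1$ nor $w_2$ sees the other matching, so $w_1w_2$ can be added to it, contradicting Condition~\eqref{condition-1}.
\item If $x\neq y$, delete $u_1w_1$ and $w_2u_3$, add $u_1u_2$ to $M_x$ and $u_2u_3$ to $M_y$, and then add $w_1w_2$ to either matching. This enlarges $M_1\cup M_2$, again contradicting Condition~\eqref{condition-1}.
\end{itemize}

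\textbf{Induction for $k\ge 2$.} Assume the claim for fewer linking $P_2$s, and let $w_1w_2$ be the first $P_2$, with $u_1w_1\in M_x$. As above, $u_1$ is a $3$-vertex and $w_1$ is a $2$-vertex. Replace $u_1w_1$ by $u_1u_2$ in $M_x$; this is valid because $u_2$ sees no matching. The sizes of $M_1,M_2$ are unchanged, so Condition~\eqref{condition-1} still holds. Now $u_2u_3$ is a $P_2$-component and $u_1w_1w_2$ is a $P_3$ whose middle vertex $w_1$ has degree two.

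The number of components of $G_{M_1,M_2}$ is also unchanged: we trade the $P_3$ and the $P_2$ for one $P_2$ and one $P_3$. So Condition~\eqref{condition-2} is preserved, and Lemmas~\ref{p2p3}--\ref{notwop3-2} still apply to the new pair. In the new pair, $u_1w_1$ and $w_1w_2$ lie on a cycle of $H$ that closes up through $u_2u_3$ and the remaining $k-1$ linking $P_2$s. If necessary, I would apply the same swap at the other end to drop two $P_2$s at once, exactly as in Lemma~\ref{notwop3-2}. This contradicts the inductive hypothesis.

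What remains to check is that the new cycle of $H$ really has the chain form with fewer $P_2$s. In particular, moving the $M$-edge does not create new distance-$\le 2$ adjacencies that break the chain structure. I would verify this using that $u_1$ and $w_1$ have the degrees forced above.
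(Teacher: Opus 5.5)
Your base cases and the double swap are fine, but the induction step for $k\ge 2$ does not actually shrink anything.

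After you replace $u_1w_1$ by $u_1u_2$ in $M_x$, the old $P_3$ becomes the $P_2$ $u_2u_3$. That $P_2$ is itself a linking $P_2$ on the new cycle: it is joined to the new $P_3$ $u_1w_1w_2$ through the $M$-edge $u_1u_2$, and to the last old $P_2$ through the $M$-edge at $u_3$. So the new cycle carries $(k-1)+1=k$ linking $P_2$s. The swap only rotates the $P_3$ one step along the same cycle of $G$, and the inductive hypothesis never applies.

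Your fallback is to swap at both ends, putting $u_1u_2$ into $M_x$ and $u_2u_3$ into $M_y$, where $M_y$ contains the chain edge at $u_3$. This is legal only when $x\neq y$, because both new edges meet at $u_2$. When it is legal, it yields two $P_3$s in one component of $H$. That contradicts Lemma~\ref{notwop3-2} (or Condition~\eqref{condition-2}, since the component count drops), not an inductive hypothesis.

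The case $x=y$ is left open, and your swaps cannot reach a contradiction there. If every $M$-edge on the cycle lies in $M_x$, each rotation preserves this, so the double swap never becomes available. Ruling this out needs degree information along the whole cycle.

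The missing idea is the degree-parity argument from $3$-irregularity. It makes induction unnecessary and is what the paper uses.
\begin{enumerate}
\item Write the chain as $u_1a_1,\ a_1b_1,\ b_1a_2,\dots,a_kb_k,\ b_ku_3$.
\item Since $u_1$ and $u_3$ are $3$-vertices, $a_1$ and $b_k$ are $2$-vertices, and each $a_ib_i$ contains at least one $2$-vertex.
\item If no $a_ib_i$ had both ends of degree two, degrees would alternate ($a_1$ has degree 2, $b_1$ degree 3, $a_2$ degree 2, \dots). This forces $b_k$ to be a $3$-vertex, a contradiction.
\item So some $a_ib_i$ has both ends of degree two. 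Its two flanking $M$-edges lie in different matchings, since otherwise $a_ib_i$ could be added to the other matching.
\item Move the flanking edge at $b_i$ onto $a_ib_i$. This keeps $|M_1\cup M_2|$ unchanged and turns that edge into a pendant of the next component. That component is a $P_2$, or the $P_3$ when $i=k$, in which case you get a $P_4$ contradicting Lemma~\ref{p2p3}.
\item Either way the number of components drops, contradicting Condition~\eqref{condition-2}.
\end{enumerate}

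The degenerate translation you flagged is not the crux; the paper also reads the $H$-cycle as an alternating cycle in $G$. In any case, a $P_2$ $ww'$ whose $3$-vertex $w$ carries both links can be eliminated. Move the $M$-edge of $w$ lying in the matching that avoids $w'$ onto $ww'$; this again lowers the component count.
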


\begin{proof}
Suppose not, i.e., $u_1u_2u_3$ is a $P_3$ in $G-M_1-M_2$ and $u_1u_2, u_2u_3$ are in the same cycle of $H$. By Lemma~\ref{notwop3-2}, there is no other $P_3$s in the cycle and let the corresponding cycle in $G$ be $u_1u_2u_3u_4 \cdots u_{2k+1}u_1$.  We know $u_{2i}u_{2i+1} \in G-M_1-M_2$, where $2 \le i \le k$, $u_{2i-1}u_{2i} \in M_1 \cup M_2$, where $2 \le i \le k$, and $u_{2k+1}u_1 \in M_1 \cup M_2$. By Lemma~\ref{p2p3}, $u_1$ and $u_3$ are $3$-vertices. Since $G$ is $3$-irregular, there must be at least one $i$, where $2 \le i \le k$, such that both $u_{2i}$ and $u_{2i+1}$ are $2$-vertices. Then we may assume $u_{2i-1}u_{2i} \in M_x$ and $u_{2i+1}u_{2i+2} \in M_y$, where $x,y \in [2]$ and $x \neq y$. Then we delete $u_{2i+1}u_{2i+2}$ from $M_y$ and add $u_{2i}u_{2i+1}$ to $M_y$. We either obtain two $P_3$s in $G-M_1-M_2$ connecting by a series of $P_2$s, which contradicts Lemma~\ref{notwop3-2}, or we obtain a $P_4$ in $G-M_1-M_2$, which contradicts Lemma~\ref{p2p3}.
\end{proof}

\begin{lemma}\label{nooddcycle}
There is no odd cycle in $H$.    
\end{lemma}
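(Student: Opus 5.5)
The plan is to first understand the structure of $H$ and then rule out odd cycles by a parity count combined with the exchange arguments of Lemmas~\ref{p2p3}--\ref{nop3inoddcycle}.

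\textbf{Degrees in $H$.} By Lemma~\ref{p2p3}, every vertex of $H$ lies in a $P_2$ or $P_3$ component of $G_{M_1,M_2}$. I would check that each vertex of $H$ has degree at most two, except possibly the two edges of a $P_3$. I would also check that two edges $e,e'$ of $G-M_1-M_2$ at distance exactly $2$ are joined through a single edge of $M_1\cup M_2$. This uses the component structure, and Lemma~\ref{p2p3} forbids adjacency inside larger components.

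\textbf{The corresponding cycle in $G$.} A cycle $C_H$ of $H$ then corresponds to a closed walk (generically a cycle) $C$ in $G$. In $C$, edges of $G-M_1-M_2$ and edges of $M_1\cup M_2$ alternate, except where two consecutive edges belong to one $P_3$. By Lemma~\ref{notwop3-2}, at most one $P_3$ can meet $C_H$. By Lemma~\ref{nop3inoddcycle}, that $P_3$ cannot contribute both of its edges to $C_H$. Hence $C$ alternates strictly between non-matching edges and $M_1\cup M_2$ edges, so $C$ has length $2|C_H|$. If $|C_H|$ is odd, then $C$ contains an odd number of $M_1\cup M_2$ edges, and these $M_1\cup M_2$ edges form a perfect matching of the vertex pairs along $C$.

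\textbf{The exchange step.} Suppose $|C_H|$ is odd. Since the $M_1\cup M_2$ edges on $C$ are pairwise vertex-disjoint, I would try to recolor them with alternating labels $M_1,M_2$ around $C$. Because their number is odd, somewhere two consecutive ones, say $u_{2i-1}u_{2i}$ and $u_{2i+1}u_{2i+2}$ around the non-matching edge $u_{2i}u_{2i+1}$, lie in the same $M_x$. Then $u_{2i}u_{2i+1}$ can be put into $M_{y}$ with $y\ne x$, provided neither $u_{2i}$ nor $u_{2i+1}$ already sees $M_y$. This is where $3$-irregularity enters. Some non-matching edge $u_{2i}u_{2i+1}$ has both endpoints $2$-vertices: otherwise every non-matching edge has a $3$-endpoint, and I would count the $3$-vertices against the $M_1\cup M_2$ edges around $C$, where no $M_1\cup M_2$ edge can join two $3$-vertices, to force such an edge. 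At such a place the two endpoints see only cycle edges. So the plan is:
\begin{enumerate}
\item shift the $M_1/M_2$ labels along arcs of $C$, swapping in whole components of $G[M_1\cup M_2]$ as in Lemma~\ref{notwop3-1}, so that the monochromatic consecutive pair sits next to an edge with two $2$-vertex endpoints;
\item add that edge to $M_y$, which contradicts Condition~\eqref{condition-1}.
\end{enumerate}
If the swap instead only moves an edge, as in Lemma~\ref{nop3inoddcycle}, it creates a $P_3$ whose two edges lie on an odd cycle of $H$, or a $P_4$. These contradict Lemma~\ref{nop3inoddcycle} and Lemma~\ref{p2p3} respectively.

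\textbf{Main obstacle.} The delicate step is the label shifting. Swapping a component of $G[M_1\cup M_2]$ may affect other edges of $C$ in the same component (a path or even cycle) and destroy the alternation elsewhere. I would handle this by choosing the odd cycle $C_H$ of minimum length. I would also use Condition~\eqref{condition-2}, the minimum number of components, to exclude configurations in which a swap leaves $|M_1\cup M_2|$ unchanged but merges components. If a direct count fails, I would fall back on the reduction of Lemma~\ref{nop3inoddcycle}: convert the odd cycle into a cycle containing a $P_3$ and then apply that lemma.
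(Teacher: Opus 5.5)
\textbf{There is a genuine gap.} Your exchange step depends on finding a non-matching edge of $C$ whose endpoints both have degree $2$, and the degree count you propose cannot produce one.

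Suppose every non-matching edge of $C$ has a degree-$3$ endpoint. Then $3$-irregularity only says the $k$ degree-$3$ vertices on $C$ are pairwise non-adjacent, i.e.\ they sit at alternate positions. Since $|C|=2k$ is even whatever the parity of $k$, this is always consistent, so no count excludes it. This configuration is the main case of the lemma, not a degenerate one. By Lemma~\ref{p2p3}, each degree-$3$ vertex of $C$ then has exactly one non-matching edge and sees both $M_1$ and $M_2$. So no edge of $C$ can ever be added to a matching directly. Your parity observation (two consecutive matching edges of $C$ lie in the same $M_x$) has nothing to act on, and the label-shifting you flag as delicate has no target. Separately, your claim that vertices of $H$ outside $P_3$'s have degree at most $2$ is false: a non-matching edge $xy$ with $d(x)=3$ can have $H$-neighbours through both matching edges at $x$ and through the matching edge at $y$.

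\textbf{The missing idea} is to use Condition~\eqref{condition-2} as a switching rule and then count leaves in $G[M_1\cup M_2]$; the parity of $k$ plays no role. Write $C=u_1u_2\cdots u_{2k}u_1$ with $e_i=u_{2i-1}u_{2i}\notin M_1\cup M_2$.

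First suppose some $e_i$ has both ends of degree $2$. Its two flanking matching edges must lie in different matchings, since otherwise $e_i$ can be added to the other matching, against Condition~\eqref{condition-1}. Then exchanging $u_{2i}u_{2i+1}$ for $e_i$ keeps $|M_1\cup M_2|$. It also merges the $P_2$ component $\{e_i\}$ into the component of $e_{i+1}$, against Condition~\eqref{condition-2}.

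Otherwise every $u_{2i-1}$ has degree $3$, with third neighbour $v_{2i-1}$. Replace $u_{2i-2}u_{2i-1}$ by $e_i$ in its matching whenever $u_{2i}u_{2i+1}$ lies in the other matching; this again reduces the number of components. Hence all $u_{2i}u_{2i+1}$ lie in one matching, say $M_1$, and all $u_{2i-1}v_{2i-1}$ lie in $M_2$.

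Now swap $M_1$ and $M_2$ on the component of $G[M_1\cup M_2]$ through $u_1$; this preserves both conditions. So that component must contain $u_2u_3$, since otherwise $u_{2k}u_1$ and $u_2u_3$ would end up in different matchings. Likewise the component through $u_3$ contains $u_4u_5$. Hence $u_{2k},u_2,u_4$, which are distinct as $k\ge 3$, are three degree-$1$ vertices in one component of a graph of maximum degree $2$. This is a contradiction.
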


\begin{proof}
Suppose not, i.e., there is an odd cycle $e_1e_2 \cdots e_k$ in $H$. Let $e_i = u_{2i-1}u_{2i}$, where $1 \le i \le k$. Then there is a corresponding cycle in $G$: $u_1u_2 \cdots u_{2k}u_1$. By Lemma~\ref{nop3inoddcycle}, there is no $P_3$ in $G-M_1-M_2$ in the cycle. Thus, we may assume $e_i = u_{2i-1}u_{2i} \in G-M_1-M_2$ and $u_{2i}u_{2i+1} \in M_1 \cup M_2$, where $1 \le i \le k$ and $u_{2k+1}:=u_1$. If there is an $i$, where $1 \le i \le k$, such that both $u_{2i-1}$ and $u_{2i}$ are $2$-vertices and say $u_{2i-2}u_{2i-1} \in M_1$, then $u_{2_i}u_{2i+1} \in M_2$ since otherwise we can add $u_{2i-1}u_{2i}$ to $M_2$ and it contradicts Condition~\ref{condition-1}. Then we delete $u_{2i}u_{2i+1}$ from $M_2$ and add $u_{2i-1}u_{2i}$ to $M_2$ (it will be refered as the ``switching argument'' in the context). It contradicts Condition~\ref{condition-2} since $u_{2i}u_{2i+1}, u_{2i+1}u_{2i+2}$ froms a $P_3$ in $G-M_1-M_2$ and the number of connected components in $G-M_1-M_2$ dropped by one.

We may assume $u_1$ is a $3$-vertex. Since $G$ is $3$-irregular and one of each pair $(u_{2i-1}, u_{2i})$,  $1 \le i \le k$, must be a $3$-vertex, we know each $u_{2i-1}$ is a $3$-vertex, where $1 \le i \le k$. Let $N(u_{2i-1}) = \{u_{2i-2}, u_{2i}, v_{2i-1}\}$, $1 \le i \le k$. Without loss of generality, we assume $u_{2k}u_1 \in M_1$. By a similar switching argument, we know each $u_{2i}u_{2i+1} \in M_1$ and each $u_{2i-1}v_{2i-1} \in M_2$, where $1 \le i \le k$. If we can change $u_{2k}u_1, u_1v_1$ to be in $M_2, M_1$, then we can apply a similar switching argument to obtain a contradiction with Condition~\ref{condition-2}. Therefore, $u_{2k}, u_1, v_1$ and $u_2, u_3, v_3$ are in the same connected component of $G[M_1 \cup M_2]$. Similarly, $u_2, u_3, v_3$ and $u_4, u_5, v_5$ are in the same connected component of $G[M_1 \cup M_2]$. Then each of $u_{2k}, u_2, u_4$ is a $1$-vertex in $G[M_1 \cup M_2]$ and they are in the same connected component of $G[M_1 \cup M_2]$, which is a contradiction with the fact that $\Delta(G[M_1 \cup M_2]) \le 2$.
\end{proof}

\textbf{Final Proof of Theorem~\ref{theorem-1}:} By Lemma~\ref{nooddcycle}, $H$ is a bipartite graph and thus can be colored by two colors $a,b$. The colors $a,b$ for vertices in $H$ can be applied to its corresponding edges in $G-M_1-M_2$. If we rename the color $a$ with $2_a$ and $b$ with $2_b$, color each edge in $M_1$ with $1_a$ and each edge in $M_2$ with $1_b$, then it will result in a $(1^2,2^2)$-packing edge-coloring of $G$ using the colors $1_a,1_b,2_a,2_b$.  \hfill \qed


\section{Proof of Theorem~\ref{theorem-2}}
We use $1_a, 1_b, 2_a, 2_b$ to denote the four colors of a $(1^2,2^2)$-packing edge-coloring. A {\em $k$-thread} is a path on $k$ vertices of degree two. We call a $(1^2,2^2)$-packing edge-coloring {\em good coloring} if it further satisfies:

Condition (I): if $e_1, e_2$ are colored with $2_a, 2_b$, then they cannot share a common endpoint;

Condition (II): if $e_1 = u_1u_2, e_2=v_1v_2$ do not share a common endpoint and are colored with $2_a, 2_b$, then there is no vertex $w$ such that $u_iwv_j$ is a path in $G$, where $1 \le i,j \le 2$.

Suppose to the contrary that there is a subcubic graph $G$ with maximum average degree less than $\frac{20}{9}$, but it has no good coloring. We take such a $G$ with minimum $|V(G)|+|E(G)|$. We may assume $G$ is connected, since otherwise we can apply the same argument to each component of $G$. We first show $G$ has no $1$-vertex and then show a few reducible configurations.

\begin{lemma}\label{mindegree}
$\delta(G) \ge 2$.
\end{lemma}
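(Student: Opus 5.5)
We argue by contradiction and extend a good coloring from a smaller graph. Since $G$ is connected and not a single vertex with no edges (a graph with no edges is trivially good-colorable), suppose $G$ has a $1$-vertex $v$ with unique neighbor $u$. Let $G' = G - v$. Then $G'$ is subcubic, and $\mad(G') \le \mad(G) < \frac{20}{9}$ because every subgraph of $G'$ is a subgraph of $G$. Also $|V(G')|+|E(G')| < |V(G)|+|E(G)|$, so by the minimality of $G$, the graph $G'$ has a good coloring $f$. We will extend $f$ to the edge $uv$ and check that the result is a good coloring of $G$, a contradiction.

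Since $d_G(u) \le 3$, the vertex $u$ has at most two other incident edges $e_1, e_2$ in $G'$. For a proper coloring, a $1$-color is available on $uv$ unless both $1_a$ and $1_b$ appear at $u$. So if at most one $1$-color is seen at $u$, we color $uv$ with the missing $1$-color. This creates no new conflict: $1$-colors only require properness at $u$, and $v$ has no other edges. Conditions (I) and (II) only restrict pairs of edges colored $2_a$ and $2_b$, so they are unaffected. This case is therefore immediate.

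The main case is when $u$ sees both $1_a$ and $1_b$, say $f(e_1)=1_a$ and $f(e_2)=1_b$. Here we must color $uv$ with a $2$-color, say $2_a$. We need that no edge within distance $2$ of $uv$ is colored $2_a$. Such edges are those incident to a neighbor of $u$ (other than $e_1$ and $e_2$, which are $1$-colored), because $v$ has no other neighbors. We must also make sure that (I) and (II) hold for $uv$ paired with any $2_b$ edge. For (I), no $2_b$ edge may meet $u$; this holds since $e_1, e_2$ are $1$-colored. For (II), no $2_b$ edge may have an endpoint adjacent to $u$ or $v$. This means no $2_b$ edge is incident to a neighbor $w_1$ or $w_2$ of $u$ (the other endpoints of $e_1,e_2$). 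Note that $v$ has no other neighbors, and an edge through $u$ itself is excluded by (I).

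So we need a $2$-color $c \in \{2_a,2_b\}$ such that no edge incident to $w_1$ or $w_2$ (other than $e_1,e_2$) is colored with either $2$-color. This is where the obstacle lies, and where the good-coloring conditions on $f$ are meant to help. Each $w_i$ has at most two further edges. If some $2$-colored edge appears at $w_1$ or $w_2$, we will recolor rather than choose directly. The first attempt is to swap the colors $1_a,1_b$ on $e_1$ or $e_2$ locally, or to recolor $e_1$ with a $2$-color and then give $uv$ a $1$-color. Concretely, if $w_1$ and its other edges see no $2_x$ within distance $2$ of $e_1$, then we set $f(e_1)=2_x$ and $f(uv)=1_a$. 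Condition (II) applied to $f$ in $G'$ constrains which $2$-colors can coexist near $w_1$ and $w_2$. For example, a $2_a$ edge at $w_1$ and a $2_b$ edge at $w_2$ would be joined through $u$, which (II) forbids. Hence only one $2$-color, say $2_a$, appears around $w_1,w_2$. Then $2_b$ is free near $u$ except possibly at distance $2$ via $w_i$'s further neighbors. If a direct choice is impossible, we fall back on recoloring $e_1$ or $e_2$ to $2_b$ and putting the freed $1$-color on $uv$. Checking that one of these options always works, using (I) and (II) for $f$, is the step I expect to require a careful small case analysis.
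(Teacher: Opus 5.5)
Your first case (some $1$-color missing at $u$) is correct and matches the paper. The whole content of the lemma, however, is the case $f(e_1)=1_a$, $f(e_2)=1_b$, and there you stop at announcing a ``careful small case analysis''. The constraints you set up for that analysis are also wrong.

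Condition (II) is about an endpoint of one edge and an endpoint of the other being joined by a \emph{path of length two} $u_iwv_j$, not about their being adjacent. So when $uv$ gets $2_b$, the $2_a$ edges that matter are those at the further neighbours $y\neq u$ of $w_1,w_2$ (via $u\,w_i\,y$). Your constraint list omits these entirely. They are exactly what produce the paper's hard subcases, where a second neighbour of $u$ sees the wrong $2$-color.

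Conversely, you demand that no $2$-colored edge at all meets $w_1$ or $w_2$. The paper does not: in Case 3, and in Case 1 when both outer edges are $2_a$, it colors $uv$ with $2_b$ while $w_1,w_2$ see $2_a$. Under your requirement, the case $d(w_1)=d(w_2)=3$ cannot be closed by any move you list:
\begin{itemize}
\item properness and (I) force each $w_i$ to see exactly one $2$-color, so the direct choice is impossible;
\item recoloring $e_i$ with a $2$-color violates either the packing condition or (I) at $w_i$;
\item swapping $1_a,1_b$ on $e_1,e_2$ changes nothing about the $2$-colors nearby.
\end{itemize}

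What is missing is the case analysis the paper actually performs. It first reduces to $d(u)=3$ and disposes of the situation where $w_1$ or $w_2$ is itself a $1$-vertex, by deleting both pendant vertices. It then splits on $(d(w_1),d(w_2))$:
\begin{itemize}
\item If both are $3$-vertices, (II) along $w_1uw_2$ forces them to see the same $2$-color $2_a$, and $uv$ gets $2_b$.
\item If at least one is a $2$-vertex, (II) along $w_1uw_2$ forbids the two $2$-colors from appearing one at $w_1$ and the other at $w_2$. Then $uv$ gets a $2$-color unless a second neighbour of $u$ sees the opposite $2$-color. In that event, one recolors the edge $e_i$ leading to that second neighbour with the $2$-color it does not see, and gives $uv$ the freed $1$-color.
\end{itemize}
Each of these moves has to be checked against the packing condition, (I) and (II). Without this, your proposal does not prove the lemma.
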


\begin{proof}
Suppose $G$ has a $1$-vertex $u$. Let $N(u) = \{u_1\}$. We know $u_1$ is a $3$-vertex, since otherwise, say $u_1$ is a $2$-vertex (if $u_1$ is a $1$-vertex, then the entire graph is a $K_2$) with neighbours $u,u_2$, we delete $u$ from $G$ to obtain a subcubic graph $G'$ with $\mad(G) < \frac{20}{9}$. By the minimality of $G$, $G'$ has a good coloring $f$. We color $uu_1$ with a color in $\{1_a, 1_b\} \setminus \{f(u_1u_2)\}$ to obtain a good coloring, which is a contradiction. Let $N(u_1) = \{u, u_2, u_3\}$. Note $u_2,u_3$ cannot be both $1$-vertices, since otherwise $G$ only has three edges. We show none of $u_2, u_3$ is a $1$-vertex. Suppose not, say $u_2$ is a $1$-vertex. We delete $u,u_2$ to obtain a subcubic graph $G'$ with $\mad(G') < \frac{20}{9}$. By the minimality of $G$, $G'$ has a good coloring $f$. If $f(u_1u_3) \in \{2_a, 2_b\}$, then we color $uu_1, u_1u_2$ with $1_a, 1_b$ to obtain a good coloring. Otherwise, say $f(u_1u_3) = 1_a$. In case $u_3$ is a $3$-vertex, then, by Condition (I), $u_3$ sees exactly one of $2_a, 2_b$, say $2_a$. We color $uu_1, u_1u_2$ with $2_b, 1_b$. Since $u_3$ sees $2_a$, the $2_b$ used on $uu_1$ satisfies Condition (I) and (II), and it is a good coloring. In case $u_3$ is a $2$-vertex, say $N(u_3) = \{u_1, u_4\}$. We may assume $f(u_3u_4) = 1_b$, since if $f(u_3u_4) \in \{2_a, 2_b\}$ then we obtain a good coloring similarly to the case when $u_3$ is a $3$-vertex. By Condition (I), $u_4$ sees at most one of $\{2_a, 2_b\}$, say $u_4$ does not see $2_b$. We color $uu_1, u_1u_2$ with $2_a, 1_b$. This is a good coloring and we are done. Therefore, we assume none of $u_2,u_3$ are $1$-vertices. We delete $u$ to obtain a subcubic graph $G'$ with $\mad(G') < \frac{20}{9}$. By the minimality of $G$, $G'$ has a good coloring $f$. By Condition (I), we may assume $\{f(u_1u_2), f(u_1u_3)\} = \{1_a, 2_a\}$ or $\{1_a, 1_b\}$. In the former case, we color $uu_1$ with $1_b$ to obtain a good coloring. In the latter case, we may assume $f(u_1u_2) = 1_a$ and $f(u_1u_3) = 1_b$. Note that it is possible that $u_2u_3 \in E(G)$ but it will not influence our proof. 

\textbf{Case 1:} Both $u_2, u_3$ are $2$-vertices. Note that $u_2u_3 \notin E(G)$ since otherwise the graph only has $4$ edges. Let $N(u_2) = \{u_1,u_4\}$ and $N(u_3) = \{u_1, u_5\}$. By Condition (II), $\{f(u_2u_4), f(u_3u_5)\} \neq \{2_a, 2_b\}$. By symmetry, $f(u_2u_4), f(u_3u_5) = 2_a, 2_a$ or $1_b, 2_a$ or $1_b, 1_a$. In the former case, we color $uu_1$ with $2_b$ to obtain a good coloring. In the middle case, we color $uu_1$ with $2_b$ to obtain a good coloring unless $u_4$ sees $2_a$. However, we can recolor $u_1u_2$ with $2_b$ and color $uu_1$ with $1_a$. This is a good coloring since $f(u_3u_5) = 2_a$ and $u_4$ sees $2_a$. In the latter case, we can color $uu_1$ with $2_a$ ($2_b$) unless $u_4$ or $u_5$ sees $2_b$ ($2_a$). Therefore, we may assume $u_4$ sees $2_a$ and $u_5$ sees $2_b$. We recolor $u_1u_2$ with $2_b$ and color $uu_1$ with $1_a$. Since $u_4$ sees $2_a$ and $u_5$ sees $2_b$, it is a good coloring.

\textbf{Case 2:} One of $u_2, u_3$ is a $2$-vertex and the other is a $3$-vertex. Say, $u_2$ is a $3$-vertex and $u_3$ is a $2$-vertex. Let $N(u_2) = \{u_1, u_4, u_5\}$ and $N(u_3) = \{u_1, u_6\}$. By Condition (I), $u_2$ sees exactly one $2$-color, say $f(u_2u_4) = 2_a$ and $f(u_2u_5) = 1_b$. By Condition (II), $f(u_3u_6) \neq 2_b$. If $f(u_3u_6) = 2_a$, then we color $uu_1$ with $2_b$. Since $f(u_3u_6) = f(u_2u_4) = 2_a$, this coloring satisfies Condition (I) and (II). Otherwise, $f(u_3u_6) = 1_a$. If $u_6$ does not see $2_a$, then we color $uu_1$ with $2_b$. Otherwise, $u_6$ sees $2_a$, and thus we can recolor $u_1u_3$ with $2_b$ and color $uu_1$ with $1_b$ to obtain a good coloring. 

\textbf{Case 3:} Both $u_2,u_3$ are $3$-vertices. By Condition (I) and both $u_2,u_3$ are $3$-vertices, each of $u_2, u_3$ sees exactly one $2$-color. By Condition (II), we may assume both $u_2, u_3$ see $2_a$. We color $uu_1$ with $2_b$ and it satisfies Condition (I) and (II). Therefore, it is a good coloring and we obtain a contradiction.
\end{proof}

\begin{lemma}\label{4thread}
There is no $4$-thread.    
\end{lemma}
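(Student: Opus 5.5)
The plan is to delete the interior of the thread, apply minimality, and extend the good coloring to the three thread edges, using only $1$-colors except in one case.

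Suppose to the contrary that $G$ contains a $4$-thread $v_1v_2v_3v_4$, and let $v_0$ and $v_5$ be the other neighbours of $v_1$ and $v_4$. By Lemma~\ref{mindegree} these exist. Let $G' = G - \{v_2, v_3\}$. Then $G'$ is a subgraph of $G$, so $\mad(G') \le \mad(G) < \frac{20}{9}$, and $G'$ is subcubic and smaller. By the minimality of $G$, $G'$ has a good coloring $f$. Write $\alpha = f(v_0v_1)$ and $\beta = f(v_4v_5)$. We must color $v_1v_2, v_2v_3, v_3v4$ so that the result is still a good coloring. We need only check distances, Condition (I) and Condition (II) for the new edges: every edge of $G$ within distance $2$ of a new edge lies in $\{v_0v_1, v_4v_5\}$, in the new edges, or among the edges at $v_0$ and $v_5$.

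The main idea is to use only $1$-colors on the three new edges whenever possible. This is because a $1$-color only has to differ from the colors of edges sharing an endpoint, and it creates no new instances of Condition (I) or (II). Consider the path $v_0v_1, v_1v_2, v_2v_3, v_3v_4, v_4v_5$. We try to color the middle three edges alternately with $1_a, 1_b$.

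\textbf{Case A:} at least one of $\alpha, \beta$ is a $2$-color. Then at most one end imposes a parity constraint on the alternating sequence. Choose the parity that avoids the $1$-colored end, if there is one; if both ends are $2$-colors, either parity works. This fixes the colors of $v_1v_2, v_2v_3, v_3v_4$ from $\{1_a,1_b\}$ with consecutive edges distinct and differing from $\alpha$ and $\beta$ at the ends. For example, if $\alpha = 2_a$ and $\beta = 1_a$, use $1_b, 1_a, 1_b$. If $\alpha,\beta$ are $2$-colors (same or different), use $1_a,1_b,1_a$.

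\textbf{Case B:} both $\alpha, \beta$ are $1$-colors and $\alpha = \beta$, say $1_a$. Use $1_b, 1_a, 1_b$.

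\textbf{Case C:} $\{\alpha,\beta\} = \{1_a,1_b\}$, say $\alpha = 1_a$ and $\beta = 1_b$. This is the only parity obstruction. Color $v_1v_2$ with $1_b$, $v2v_3$ with $2_a$, and $v_3v_4$ with $1_a$. The only edges within distance $2$ of $v_2v_3$ are $v_1v_2, v_3v_4, v_0v_1, v_4v_5$, and all of these carry $1$-colors. So $2_a$ is legal, and Conditions (I) and (II) for $v_2v_3$ hold trivially. The pair ($v_2v_3$, an edge at $v_0$ or $v_5$) is not subject to Condition (II): such an edge and $v_2v_3$ are not joined through a single middle vertex, since they are at distance $3$.

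In every case we obtain a good coloring of $G$, contradicting the choice of $G$.

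The step needing the most care is Case C. There I must double-check the exact scope of Condition (II): an edge at $v_0$ is joined to $v_2v_3$ only via the two-vertex path $v_0v_1v_2$, so it is not constrained. This is what makes the length-$4$ thread (rather than a shorter one) reducible.
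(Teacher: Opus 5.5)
Your Cases A and B are correct and match the paper's treatment (a $1_a$--$1_b$ chain on the three new edges). The gap is in Case C, where you claim Condition (II) does not relate $v_2v_3$ to the edges at $v_0$ or $v_5$. It does.

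\begin{itemize}[label={}]
\item Take an edge $v_0z$ with $z\neq v_1$. Then $v_2v_1v_0$ is a path $u_iwv_j$ with $u_i=v_2$, $v_j=v_0$, and middle vertex $w=v_1$, which lies outside both edges.
\item Two edges colored $2_a$ and $2_b$ at distance $3$ in the line graph through such a middle vertex are exactly what Condition (II) forbids. The paper uses it this way, e.g.\ ``unless $v$ sees $2_a$'' in the proof of Lemma~\ref{3thread}.
\item Your check only covered distance $\le 2$. That range governs the packing condition, not Condition (II).
\end{itemize}

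Consequently $1_b,2_a,1_a$ fails whenever $v_0$ or $v_5$ sees $2_b$. Likewise, $2_b$ on the middle edge fails whenever one of them sees $2_a$. Suppose $v_0$ sees $2_a$ and $v_5$ sees $2_b$. Nothing rules this out, since $f$ is an arbitrary good coloring of $G'$ and $v_0,v_5$ may be far apart. Then your scheme has no valid choice, and an all-$1$-color extension is blocked by parity. Your closing remark on why $4$-threads are reducible rests on the same misreading.

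The repair, which is the paper's route, is to put the $2$-color on an end edge: color $v_1v_2,v_2v_3,v_3v_4$ with $2_x,1_b,1_a$.

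\begin{itemize}[label={}]
\item Apart from $v_4v_5$ (colored $1_b$), the only old edges that can conflict with $2_x$ on $v_1v_2$ are incident with $v_0$ or with a neighbor of $v_0$. So the far end $v_5$ no longer matters.
\item You need two things: $v_0$ does not see $2_x$, and no edge incident with a neighbor $y\neq v_1$ of $v_0$, but not with $v_0$, carries the other $2$-color.
\item Here Condition (II) is read as the paper's own arguments read it, with $w$ outside both edges. Under that reading, a $2$-colored edge at $v_0$ itself does not block the other $2$-color on $v_1v_2$.
\end{itemize}

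Now choose $2_x$ as follows.

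\begin{itemize}[label={}]
\item If $v_0$ sees a $2$-color $2_c$, let $2_x$ be the other one. Condition (I) for $f$ gives that $v_0$ does not see $2_x$. The packing property of $f$ forbids $2_c$ on any edge within distance $2$ of the $2_c$-edge at $v_0$, which covers every edge at the other neighbors of $v_0$.
\item If $v_0$ sees no $2$-color, apply Conditions (I) and (II) for $f$ with middle vertex $v_0$. They show that all $2$-colored edges at the other neighbors of $v_0$ carry one common $2$-color $2_c$. Take $2_x=2_c$.
\end{itemize}

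Either way you obtain a good coloring of $G$ in Case C.
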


\begin{proof}
Suppose not, say $u_1v_1v_2v_3v_4u_2$ is a $4$-thread. We delete $v_2, v_3$ to obtain a subcubic graph $G'$ with $\mad(G') < \frac{20}{9}$. By the minimality of $G$, $G'$ has a good coloring $f$. By symmetry, $f(u_1v_1), f(u_2v_4) = 1_a, 1_b$ or $1_a, 1_a$ or $1_a, 2_a$ or $2_a, 2_b$ or $2_a, 2_a$. In the last four cases, we obtain a good coloring of $G$ by coloring $v_1v_2, v_2v_3, v_3v_4$ with a $1_a$-$1_b$ chain. In the first case, we color $v_1v_2, v_2v_3, v_3v_4$ with $2_a, 1_b, 1_a$ to obtain a good coloring unless $u_1$ sees $2_a$. However, we color $v_1v_2, v_2v_3, v_3v_4$ with $2_b, 1_b, 1_a$ to obtain a good coloring.
\end{proof}

\begin{lemma}\label{3thread}
If a $3$-vertex $u$ is adjacent to two $3$-threads, then it cannot adjacent to another $2$-thread.    
\end{lemma}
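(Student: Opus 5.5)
We argue by contradiction in the usual reducible-configuration style. Suppose the $3$-vertex $u$ has neighbours $x,y,z$, where $x$ starts a $3$-thread $x x_2 x_3$ ending at a vertex $x'$, $y$ starts a $3$-thread $y y_2 y_3$ ending at $y'$, and $z$ starts a $2$-thread $z z_2$ ending at $z'$. Here $x',y',z'$ are vertices of degree $3$, since by Lemma~\ref{4thread} the threads are maximal and $\delta(G)\ge 2$ by Lemma~\ref{mindegree}. We delete $u$ together with the interior vertices of the three threads, i.e.\ $x,x_2,x_3,y,y_2,y_3,z,z_2$, to obtain a proper subgraph $G'$. Since $\mad$ is monotone under taking subgraphs, $G'$ is subcubic with $\mad(G')<\frac{20}{9}$, so by minimality it has a good coloring $f$. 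The only precolored edges that constrain us are the three pendant edges at $x',y',z'$ (in $G$ these are $x_3x'$, $y_3y'$, $z_2z'$, which are uncolored) together with the edges of $G'$ at $x',y',z'$. By Condition (I), each of $x',y',z'$, having degree $2$ in $G'$, sees at most one $2$-color. So the constraint at each end reduces to the pair (set of $1$-colors seen, $2$-color seen or not).

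We then extend $f$ along the three paths $x'x_3x_2xu$, $y'y_3y_2yu$ and $z'z_2zu$ to the $3$-vertex $u$. The paths have $4$, $4$ and $3$ edges respectively, and the three edges at $u$ must receive distinct colors. We also have to respect Conditions (I) and (II) at $u$ and at the ends. Condition (I) says that $u$ sees at most one $2$-color, so two of the edges at $u$ get $1_a,1_b$ and the third gets $1$-color-free treatment, i.e.\ some $2$-color. The strategy is to fix the coloring at $u$ first and then fill each thread.

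\begin{itemize}
\item Put $uz=2_c$ for a suitable $c\in\{a,b\}$, and $ux,uy=1_a,1_b$.
\item Fill each $3$-thread path of $4$ edges by a $1_a$/$1_b$ alternating chain with one $2$-color inserted. This is the same flexible gadget used in the proof of Lemma~\ref{4thread}, where a path of three uncolored edges between two colored edges can always be completed, using the two choices of $2$-color to avoid a $2$-color seen at the endpoint.
\end{itemize}

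The long threads give a lot of freedom: on a $4$-edge path we can choose where the single $2$-color sits, and which one it is, to dodge Condition (II) conflicts at $x'$ and at $u$.

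The main obstacle is the short $2$-thread side. The path $z'z_2zu$ has only three edges, and $z'$ may see both $1_a$ and $1_b$ or a $2$-color, which forces things. I would do a case split on the colors at $z'$, as in Lemma~\ref{4thread}: the cases $\{1_a,1_b\}$, $\{1_a,2_a\}$, and so on up to symmetry. In each case I would choose the colors of $z'z_2$, $z_2z$ and $zu$ first, possibly letting $zu$ be a $1$-color and putting the $2$-color at $u$ on $ux$ or $uy$ instead. I would then verify that the $3$-threads can absorb whatever is forced at $u$. The difficulty lies in checking Condition (II) across $u$: a $2$-colored edge at $u$ together with an opposite $2$-color within distance two on another branch is forbidden. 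The long threads let us keep their $2$-colors at least two edges away from $u$, which I expect resolves it. This gives a good coloring of $G$, a contradiction.
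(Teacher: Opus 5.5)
\textbf{There is a genuine gap: your reduction deletes too much, and the extension step fails as stated.}

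Once $u$ and all eight interior thread vertices are removed, each of $x',y',z'$ is a $2$-vertex of $G'$ whose two edges carry whatever colors $f$ gives them. The first new edge at each end ($x'x_3$, $y'y_3$, $z'z_2$) is adjacent to both of those edges. Your claim that the constraint at an end ``reduces to the pair (set of $1$-colors seen, $2$-color seen or not)'' is false. Since $2$-colors need line-graph distance at least $3$, and because of Condition (II), the admissible colors of $z'z_2$ also depend on $2$-colors at the $G'$-neighbours of $z'$ and one step beyond.

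Concretely, let $p,q$ be the neighbours of $z'$ in $G'$ and let $r\neq z'$ be a neighbour of $q$. Suppose $f(z'p)=1_a$ and $f(z'q)=1_b$. Suppose also that some edge $pp_1$ with $p_1\neq z'$ is colored $2_a$, and some edge $rs$ not incident with $q$ or $z'$ is colored $2_a$ (for instance, $q$ is a $2$-vertex of $G'$ with $f(qr)=1_a$). This is compatible with $f$ being a good coloring of $G'$. Then $z'z_2$ has no admissible color:
\begin{itemize}
\item not $1_a$ or $1_b$, by properness at $z'$;
\item not $2_a$, since $z'z_2$ is at line-graph distance $2$ from $pp_1$;
\item not $2_b$, by Condition (II) along the path $z'qr$, whose middle vertex lies on neither edge.
\end{itemize}
No choice on the eleven new edges changes this, and you never recolor $G'$, so the argument breaks. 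The same obstruction occurs at $x'$ and $y'$, so the difficulty is not confined to the short thread. The gadget of Lemma~\ref{4thread} does not transfer either. There the end edges $u_1v_1$, $u_2v_4$ remain in $G'$, so each new edge meets at most one precolored edge, and only at a $2$-vertex.

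\textbf{How the paper avoids this.} It deletes only the middle vertex $v_2$ of one $3$-thread $uv_1v_2v_3v$. Every edge at $u$, at $v$, and at all other $3$-vertices keeps its $f$-color, and only $v_1v_2,v_2v_3$ remain to be colored. The easy cases are immediate. The substance of the lemma lies in two cases:
\begin{itemize}
\item $f(uv_1)=f(vv_3)=1_a$;
\item $\{f(uv_1),f(vv_3)\}=\{2_a,2_b\}$, which violates Condition (II) along $v_1v_2v_3$.
\end{itemize}
These are settled by explicit recolorings at $u$ and along the other $3$-thread $uw_1w_2w_3w$ and the $2$-thread $uu_1u_2u'$, including $1_a$/$1_b$ chain switches. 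That is exactly where the two other threads are used, and it is the part your proposal leaves at ``I expect resolves it.''

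\textbf{A smaller point.} Lemma~\ref{4thread} does not make $z'$ a $3$-vertex. The third thread may itself be a $3$-thread, and the discharging bound (at most seven $2$-vertices per $3$-vertex) needs that case excluded too. This is why the paper does not assume $u'$ has degree $3$.
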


\begin{proof}
Suppose not, say $u$ is adjacent to two $3$-threads $uv_1v_2v_3v$ and $uw_1w_2w_3w$, where $v,w$ are $3$-vertices, and a $2$-thread $uu_1u_2u'$. We delete $v_2$ to obtain a subcubic graph $G'$ with $\mad(G') < \frac{20}{9}$. By the minimality of $G$, $G'$ has a good coloring $f$. By symmetry,  $f(uv_1), f(vv_3) = 1_a, 1_b$ or $1_a, 2_a$ or $2_a, 2_a$ or $1_a, 1_a$ or  $2_a, 2_b$. In the first three cases, we obtain a good coloring by coloring $v_1v_2, v_2v_3$ with a $1_a,1_b$ or $1_b, 1_a$. In the fourth case, say $u$ sees $2_a$. We can color $v_1v_2, v_2v_3$ with $2_b, 1_b$ unless $v$ sees $2_a$. If $f(uu_1) = 2_a$, then $uw_1, w_1w_2, w_2w_3, w_3w$ must have colors $1_b, 1_a, 1_b, 1_a$, since otherwise we can switch the $1_a$-$1_b$ chain starting from $uv_1$ so that $uv_1$ is colored with $1_b$. We obtain a good coloring by using $1_a, 1_b$ at $v_1v_2, v_2v_3$. By Condition (I), $f(u_1u_2) \neq 2_b$. We further claim $f(u_1u_2) = 1_a$, since otherwise we recolor $uu_1, uv_1$ with $1_a, 2_b$, and color $v_1v_2, v_2v_3$ with $1_a, 1_b$. We also know $f(u_2u') = 1_b$ since otherwise we recolor $u_1u_2$ with $1_b$, which is a contradiction. Thus, we recolor $uu_1, uw_1$ with $1_b, 2_b$, and color $v_1v_2, v_2v_3$ with $2_a, 1_b$ to obtain a good coloring. This is a contradiction and hence we can assume $f(uw_1) = 2_a$. Similarly to the proof for $f(uu_1) = 2_a$, we know $uu_1,u_1u_2, u_2u'$ have colors $1_b, 1_a, 1_b$. Furthermore, $w_1w_2, w_2w_3, w_3w$ must be $1_a, 1_b, 1_a$ since otherwise we can recolor so that $w_1w_2$ is colored with $1_b$, recolor $uw_1, uv_1$ with $1_a, 2_a$, and color $v_1v_2, v_2v_3$ with $1_a, 1_b$ to obtain a good coloring. However, we recolor $uw_1$ with $2_b$ and color $v_1v_2, v_2v_3$ with $2_a, 1_b$ to obtain a good coloring.

In the last case, we must recolor one of $uv_1$ and $vv_3$ since it does not satisfy Condition (II). By Condition (I), we may assume $f(uu_1) = 1_a$ and $f(uw_1) = 1_b$. If $f(w_1w_2) = 2_b$, then, $f(ww_3) \neq 2_b$, and by Condition (I), $f(w_2w_3) \neq 2_a$. We recolor $uw_1$ and $uv_1$ with $2_a, 1_b$, recolor $w_1w_2$ with an available $1$-color, and color $v_1v_2, v_2v_3$ with $1_a, 1_b$. Since $f(ww_3) \neq 2_b$, this is a good coloring. Therefore, we may assume $f(w_1w_2) = 1_a$. By Condition (II), we know $f(w_2w_3) \neq 2_b$. We further claim $f(w_2w_3) = 1_b$. Suppose not, $f(w_2w_3) = 2_a$, and we must have $f(ww_3) = 1_b$, since otherwise we recolor $w_2w_3$ with $1_b$, recolor $uw_1, uv_1$ with $2_a, 1_b$, and color $v_1v_2, v_2v_3$ with $1_a, 1_b$ to obtain a good coloring. However, we recolor $uv_1, uw_1, w_1w_2, w_2w_3$ with $1_b, 2_a, 1_b, 1_a$, and color $v_1v_2, v_2v_3$ with $1_a, 1_b$ to obtain a good coloring. Therefore, $f(w_2w_3) = 1_b$. We can recolor $uv_1, uw_1$ with $1_b, 2_a$ unless $f(ww_3) = 2_b$. Then we claim $f(u_1u_2) = 2_b$, since otherwise, $f(u_1u_2) = 1_b$, and we recolor $uv_1, uw_1$ with $1_b, 2_b$ and color $v_1v_2, v_2v_3$ with $1_a, 1_b$ to obtain a good coloring unless $f(u_2u') = 2_a$. However, we recolor $uv_1, uu_1$ with $1_a, 2_b$ and color $v_1v_2, v_2v_3$ with $1_b, 1_a$ to obtain a good coloring. By Condition (I), $f(u'u_2) \in \{1_a, 1_b\}$. We recolor $u_1u_2, uu_1, uw_1, w_1w_2, w_2w_3$ with an available $1_a$-$1_b$ chain, recolor $uv_1$ with $2_b$, and color $v_1v_2, v_2v_3$ with $1_a, 1_b$. This is a good coloring and we obtain a contradiction.
\end{proof}

We finish the proof by a discharging argument. Let the initial charge of each vertex be $\ch(v) = d(v) - \frac{20}{9}$. Since $\mad(G) < \frac{20}{9}$, 
\begin{equation}\label{equation1}
\sum\limits_{v \in V(G)} \ch(v) = \sum\limits_{v \in V(G)} (d(v) - \frac{20}{9}) = 2 |E(G)| - \frac{20}{9} |V(G)| < 0.    
\end{equation}
We redistribute the charges according to the following rule:

\textbf{Rule 1:} Every $3$-vertex gives each $2$-vertex $\frac{1}{9}$ if they can be connected by a path of $2$-vertices.

By Lemma~\ref{mindegree}, we only need to consider $2$-vertices and $3$-vertices. By Lemma~\ref{4thread} and~\ref{3thread}, each $3$-vertex can be connected with at most seven $2$-vertices via a path of $2$-vertices. Therefore, each $3$-vertex $u$ has final charge $\ch^*(u) \ge 3 - 7 \cdot \frac{1}{9} - \frac{20}{9} = 0$. On the other hand, each $2$-vertex connects with exactly two $3$-vertices via a path of $2$-vertices. Therefore, each $2$-vertex $v$ has final charge $\ch^*(v) \ge 2+ 2 \cdot \frac{1}{9} - \frac{20}{9} = 0$. However, by equation~\eqref{equation1},
$$ 0 \le \sum\limits_{v \in V(G)} \ch^*(v) = \sum\limits_{v \in V(G)} \ch(v) < 0,$$
which is a contradiction. \hfill \qed

\section{Open Question}

We end this paper by proposing the following open question:

\begin{oq}
What is the minimum positive integer $g$ such that every subcubic planar graph with girth at least $g$ is $(1^2,2^2)$-packing edge-colorable?
\end{oq}

\end{document}